\newtheorem{theorem}{Theorem}[section]
\newtheorem{lemma}[theorem]{Lemma}
\newtheorem{proposition}[theorem]{Proposition}
\newtheorem{corollary}[theorem]{Corollary}
\newtheorem{observation}[theorem]{Observation}
\DeclarePairedDelimiter\abs{\lvert}{\rvert}
\DeclarePairedDelimiter\set{\{}{\}}
\newcommand{\PR}{\textup{PR}\xspace}
\newcommand{\PRZ}{\textup{PR$^0$}\xspace}
\newcommand{\PRP}{\textup{PR$^+$}\xspace}
\newcommand{\PRM}{\textup{PR$^-$}\xspace}
\DeclareMathOperator{\dPR}{\ensuremath{d_{\PR}}}
\newcommand{\TBR}{\textup{TBR}\xspace}
\newcommand{\TBRZ}{\textup{TBR$^0$}\xspace}
\newcommand{\TBRP}{\textup{TBR$^+$}\xspace}
\newcommand{\TBRM}{\textup{TBR$^-$}\xspace}
\DeclareMathOperator{\dTBR}{\ensuremath{d_{\TBR}}}
\DeclareMathOperator{\dAD}{\ensuremath{d_{\text{AD}}}}
\DeclareMathOperator{\dEAD}{\ensuremath{d_{\text{EAD}}}}
\def\distR{\ensuremath{d_R}}
\def\utrees{\ensuremath{u\mathcal{T}_n}}
\def\utreesx[#1]{\ensuremath{u\mathcal{T}_{#1}}}
\def\unets{\ensuremath{u\mathcal{N}_n}}
\def\unetsr{\ensuremath{u\mathcal{N}_{n,r}}}
\def\unetsx[#1]{\ensuremath{u\mathcal{N}_{#1}}}
\def\replugnets{\ensuremath{u\mathcal{M}_n}}
\newcommand{\doi}[1]{\href{https://dx.doi.org/#1}{\texttt{doi:#1}}}
\author{Jonathan Klawitter}
\title[The agreement distance of unrooted phylogenetic networks]{The agreement distance of unrooted phylogenetic networks}
\affiliation{University of Würzburg, Germany}
\keywords{phylogenetic network, rearrangement operation, agreement distance, maximum agreement forest}
\begin{document}
\publicationdetails{22}{2020}{1}{22}{5709}

\pdfbookmark[1]{Title}{title}  

\maketitle

\begin{abstract} 
\pdfbookmark[1]{Abstract}{Abstract} 
A rearrangement operation makes a small graph-theoretical change to a phylogenetic network to transform it into another one.
For unrooted phylogenetic trees and networks, popular rearrangement operations are tree bisection and reconnection (TBR) 
and prune and regraft (PR) (called subtree prune and regraft (SPR) on trees).
Each of these operations induces a metric on the sets of phylogenetic trees and networks. 
The TBR-distance between two unrooted phylogenetic trees $T$ and $T'$ can be characterised by a maximum agreement forest, that is,
a forest with a minimum number of components that covers both $T$ and $T'$ in a certain way. 
This characterisation has facilitated the development of fixed-parameter tractable algorithms and approximation algorithms. 
Here, we introduce maximum agreement graphs as a generalisations of maximum agreement forests for phylogenetic networks.
While the agreement distance -- the metric induced by maximum agreement graphs -- does not characterise the TBR-distance of two networks,
we show that it still provides constant-factor bounds on the TBR-distance.
We find similar results for PR in terms of maximum endpoint agreement graphs.
\end{abstract}

\section{Introduction}
\label{sec:introduction}

Phylogenetic trees and networks are graphs where the leaves are labelled bijectively by a set of taxa, 
for example a set of organisms, species, or languages~\citep{SS03,Dun14}.
They are used to model and visualise evolutionary relationships.
While a phylogenetic tree is suited only for tree-like evolutionary histories, 
a phylogenetic network can also be used for taxa whose past includes reticulate events like hybridisation, horizontal gene transfer,
recombination, or reassortment~\citep{SS03,HRS10,Ste16}. 
Such reticulate events arise in all domains of life~\citep{TN05,RW07,MMM17,WKH17}. 
There is a distinction between rooted and unrooted phylogenetic networks.
More precisely, in a rooted phylogenetic network the edges are directed from a designated root towards the leaves, 
thus modelling evolution along the passing of time. 
On the other hand, the edges of an unrooted phylogenetic network are undirected and the network thus represents the evolutionary relatedness of the taxa. 
In some cases, unrooted phylogenetic networks can be thought of as rooted phylogenetic networks 
in which the orientation of the edges has been disregarded.
\citet{JJEvIS17,FHM18,HIJJMMS19} call such unrooted phylogenetic networks proper.
Here we focus on unrooted, binary, proper phylogenetic networks where binary means that all vertices except for the leaves have degree three. 

A rearrangement operation makes a small graph-theoretical change to transform a phylogenetic network into another one.
Since this induces neighbourhoods, rearrangement operations structure the set of phylogenetic networks on the same taxa into a space.
Because of this property, they are used by several phylogenetic inference methods 
that traverse this space~\citep{Pag93,BEAST,MRBayes,PhyML,YBN13,YDLN14,WM15a}.
Furthermore, the minimum number of rearrangement operations needed to transform one network into another induces a metric.
This allows the comparison of results obtained for different data or from different inference methods, for instance, to evaluate
their robustness or to find outliers or clusters.

\begin{figure}[htb]
  \centering
  \includegraphics{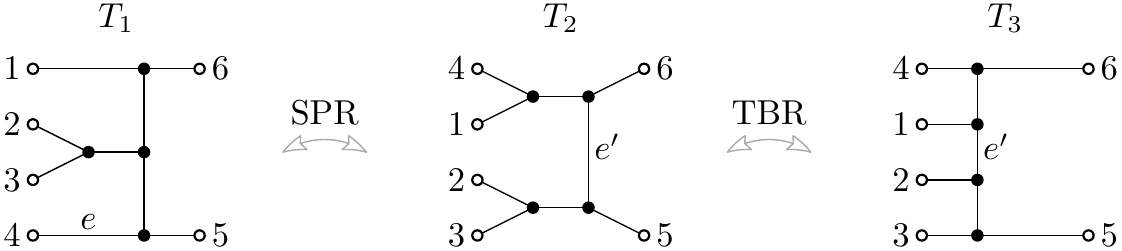}
  \caption{An SPR prunes the edge $e$ in $T_1$ and regrafts it to the edge incident to leaf 1 to obtain $T_2$. 
  		   A TBR moves the edge $e'$ to obtain $T_3$ from $T_2$.}
  \label{fig:treeIntro}
\end{figure}

On unrooted phylogenetic trees, two popular rearrangement operations are subtree prune and regraft (SPR),
which cuts (prunes) an edge at one side and then reattaches it,
and tree bisection and reconnection (TBR), which removes an edge and then reconnects the two resulting smaller trees~\citep{SOW96}. 
These are illustrated in \cref{fig:treeIntro}.
Computing the SPR- and TBR-distance of two unrooted phylogenetic trees $T$ and $T'$ is NP-hard~\citep{AS01,HDRB08}.
On the positive side, the TBR-distance of $T$ and $T'$ is characterised by a maximum agreement forest (MAF) of $T$ and $T'$,
which is a forest of smaller phylogenetic trees on which $T$ and $T'$ agree upon 
and that among all such forests has the minimum number of components~\citep{AS01}. 
The idea is that a MAF captures all parts that remain unchanged by a shortest TBR-sequence that transforms $T$ into $T'$.
\cref{fig:MAFintro} shows a maximum agreement forest $F$ for $T_1$ and $T_3$ of \cref{fig:treeIntro}.
Furthermore, a MAF $F$ together with the edges that got moved by the TBR-sequence can be embedded into $T$ and $T'$ 
such that all edges are covered; see again \cref{fig:MAFintro}.
Compared to a sequence of trees that describe a TBR-distance, MAFs provide a single structure
and have therefore been utilised for NP-hardness proofs~\citep{AS01,HDRB08}, 
for fixed-parameter tractable and approximation algorithms~\citep{AS01,HM07,RSW07,WZ09,CFS15}.
So far, no characterisation of the SPR-distance in terms of such a structure has been found
and \citet{WM18} argue why such a characterisation is unlikely.
In particular, they showed that an edge might be pruned twice and 
that common clusters (subtrees on a subset of the leaves) are not always maintained.
However, Whidden and Matsen introduced maximum endpoint agreement forests (MEAF) (precisely defined in \cref{sec:EAG}) 
as a variation of MAFs that bound the SPR-distance of two trees.

\begin{figure}[htb]
  \centering
  \includegraphics{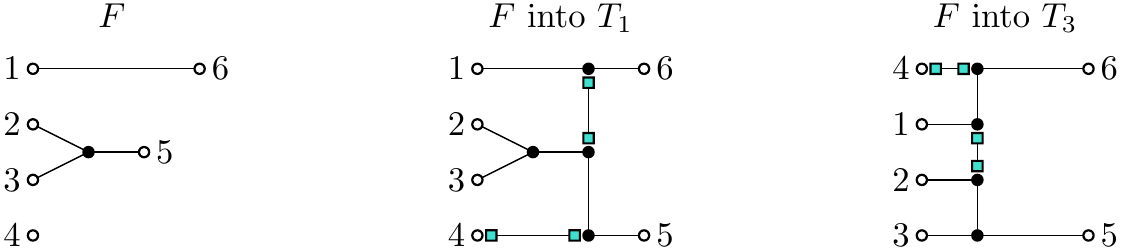}
  \caption{The graph $F$ is a maximum agreement forest for $T_1$ and $T_3$ of \cref{fig:treeIntro}. 
  Together with the edges moved by the operations, $F$ can be embedded into $T_1$ and $T_3$ such that all edges are covered.}
  \label{fig:MAFintro}
\end{figure}

SPR and TBR on trees have recently been generalised to phylogenetic networks with the operations prune and regraft (PR) and TBR~\citep{FHMW17,JK19}.
In principle, these operations work the same on networks as on trees. 
A PR operation first prunes an edge at one side and then reattaches it at another edge; 
a TBR operation on a network may also first remove an edge and then add a new edge like a TBR operation on a tree. 
This is illustrated in \cref{fig:networksIntro}.
However, both PR and TBR may also remove or add an edge to change the size of the network (see \cref{fig:unets:TBR}). 
\citet{JK19} studied several properties of spaces of networks under PR and TBR  
and, among other results, showed that computing the TBR-distance of two networks is NP-hard.

\begin{figure}[htb]
  \centering
  \includegraphics{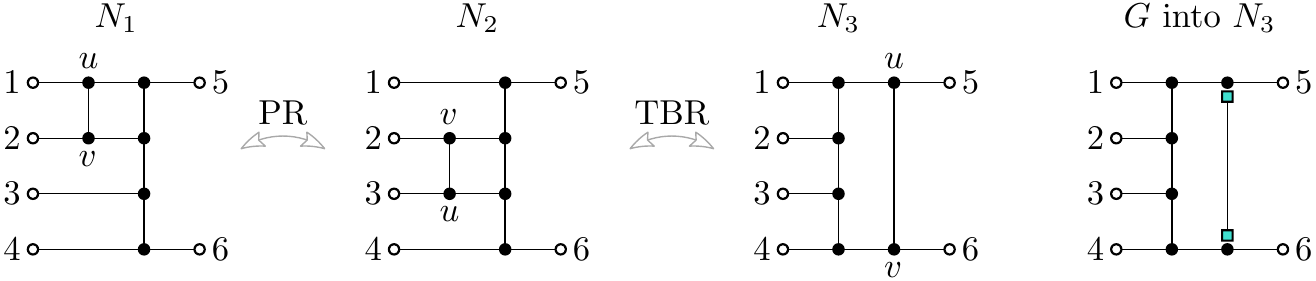}
  \caption{A PR prunes the edge $\set{u, v}$ at $u$ in $N_1$ and regrafts it to the edge incident to leaf 3 to obtain $N_2$. 
  		   A TBR moves the edge $\set{u, v}$ to obtain $N_3$ from $N_2$.
  		   The graph $G$ is a maximum agreement graph for $N_1$ and $N_3$ shown with an embedding into $N_3$.} 
  \label{fig:networksIntro}
\end{figure}

Similar to the TBR-distance of unrooted phylogenetic trees, 
the SPR-distance of two rooted phylogenetic trees can be characterised by a rooted version of MAFs~\citep{BS05}.  
This has again facilitated the development of fixed-parameter and approximation algorithms~\citep{BMS08,Wu09,BStJ09,WBZ13,BSTW17}.
Prune and regraft (PR) and subnet prune and regraft (SNPR) are extensions of SPR for rooted phylogenetic networks~\citep{BLS17,Kla19}. 
Recently, we generalised MAFs to maximum agreement graphs (MAG) for networks. 
Similar to a MAF, the idea of a MAG is that its components model those parts of two phylogenetic networks 
on which they agree upon and on which they disagree upon
(or the parts that stay unchanged and get changed under a sequence of rearrangement operations).
\cref{fig:networksIntro} illustrates this for the two unrooted phylogenetic networks $N_1$ and $N_3$. 
While MAFs characterise the SPR-distance, we have shown that MAGs do not characterise 
the PR-distance (nor the SNPR-distance) of two rooted phylogenetic networks~\citep{Kla19}.
This is due to similar reasons to why MAFs and MEAFs do not characterise the SPR-distance of two unrooted trees.
However, we showed that MAGs induce a metric on phylogenetic networks, the agreement distance, 
which bounds the PR-distance of rooted networks~\citep{Kla19}.

In this paper, we look at how MAF and MEAF generalise for unrooted phylogenetic networks 
by introducing maximum agreement graphs and maximum endpoint agreement graphs 
and show that they induce metrics (\cref{sec:AG} and \cref{sec:EAG}, respectively). 
We call these metrics the agreement distance (AD) and endpoint agreement distance (EAD), respectively.
We then study the relations of AD, EAD, the TBR-distance, and the PR-distance in \cref{sec:bounds}.

\section{Preliminaries}
\label{sec:preliminaries}

This section provides the notation and terminology used in the remainder of the paper.
In particular, we introduce notation in the context of phylogenetic networks as well
as the PR and TBR operations. 

\paragraph{Phylogenetic networks and trees.}
\pdfbookmark[2]{Phylogenetic networks}{PhyNets}
Let $X = \{1, 2, \ldots, n\}$ be a finite set.
An \emph{unrooted binary phylogenetic network} $N$ on $X$ is a connected undirected multigraph 
such that the leaves are bijectively labelled with $X$ and all non-leaf vertices have degree three.
It is called \emph{proper} if every cut-edge separates two labelled leaves \citep{FHM18}, and \emph{improper} otherwise.
Unless mentioned otherwise, we assume that a phylogenetic networks is proper.
Note that our definition permits the existence of parallel edges in~$N$.
An \emph{unrooted binary phylogenetic tree} on $X$ is an unrooted binary phylogenetic network that is a tree.
See \cref{fig:unets:treeAndNetwork} for examples.
An edge of $N$ is \emph{external} if it is incident to a leaf, and \emph{internal} otherwise.

To ease reading, we refer to a proper unrooted binary phylogenetic network (resp. unrooted binary phylogenetic
tree) on $X$ simply as a phylogenetic network or network (resp. phylogenetic tree or tree).
Furthermore, let $\unets$ denote the set of all phylogenetic networks on $X$ and let $\utrees$ denote
the set of all phylogenetic trees on $X$ where $n = \abs{ X }$.

\begin{figure}[htb]
  \centering
  \includegraphics{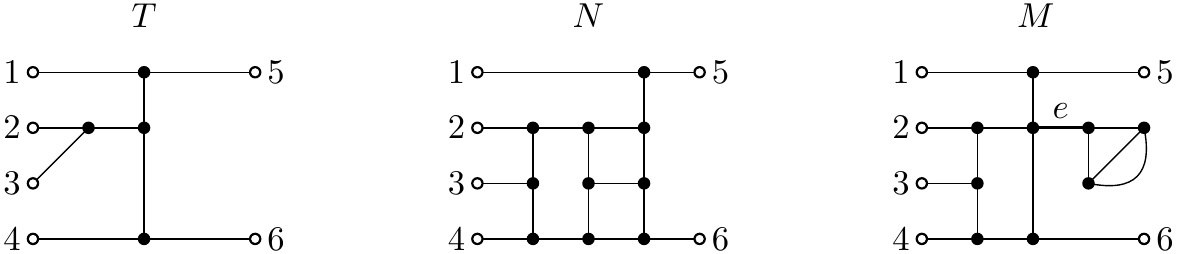}
  \caption{An unrooted, binary phylogenetic tree $T \in \utreesx[6]$ and an unrooted, binary proper phylogenetic network $N \in \unetsx[6]$. 
  The unrooted, binary phylogenetic network $M$ is improper since the cut-edge $e$ does not lie on a path that connects two leaves.}
  \label{fig:unets:treeAndNetwork}
\end{figure}

A network $N$ has \emph{reticulation number} $r$ or, equivalently, is in \emph{tier} $r$ 
if $r$ is the minimum number of edges that have to be deleted from $N$ to obtain a spanning tree of $N$.
Note that $r = \abs{E} - (\abs{V} - 1)$ where~$E$ and~$V$ are the edge and vertex set of $N$, respectively.
This number is also known as the cyclomatic number of a graph~\citep{Die17}.
Let $\unetsr$ denote tier $r$ of $\unets$, that is, the set of networks in $\unets$ that are in tier $r$.

\paragraph{Suboperations and sprouts.}
\pdfbookmark[2]{Suboperations}{suboperations}
Let $G$ be an undirected graph. 
A degree-two vertex $v$ of $G$ with adjacent vertices $u$ and $w$ gets \emph{suppressed} by deleting $v$ and its incident edges, and adding the edge $\set{u, w}$. 
The reverse of this suppression is the \emph{subdivision} of $\set{u, w}$ with a vertex $v$.

Let $\set{u, v}$ be an edge of $G$ such that $u$ either has degree one and is labelled (like a leaf of a network) or has degree three.
A \emph{pruning} of $\set{u, v}$ at $u$ is the process of deleting $\set{u, v}$ and adding a new edge~$\set{\bar u, v}$, 
where $\bar u$ is a new (unlabelled) vertex. If $u$ is now a degree two vertex, then we also suppress $u$. 
In reverse, the edge $\set{\bar u, v}$ gets \emph{regrafted} to an edge $\set{x, y}$ by subdividing $\set{x, y}$ with a new vertex $u$
and then identifying $\bar u$ and $u$.
Alternatively, $\set{\bar u, v}$ may be \emph{regrafted} to a labelled singleton $u$ by identifying $\bar u$ and~$u$.
The edge $\set{u, v}$ gets \emph{removed} by deleting $\set{u, v}$ from $N$ and suppressing any resulting degree-two vertices. 

A \emph{sprout} of $G$ is an unlabelled degree one vertex of $G$.
For example, applying a pruning to a phylogenetic network yields a graph with exactly one sprout.

\paragraph{Rearrangement operations.}
\pdfbookmark[2]{Rearrangement operations}{TBR} 
Let $N \in \unets$. 
The \TBR operation is the rearrangement operation that transforms $N$ 
into a phylogenetic network $N' \in \unets$ in one of the following four ways: 
\begin{itemize}[leftmargin=*,label=(TBR$^-$)]
    \item[(\TBRZ)] Remove an internal edge $e$ of $N$,  subdivide an edge of the resulting graph with a new vertex $u$, 
    subdivide an edge of the resulting graph with a new vertex $v$, and add the edge $\set{u, v}$; or\\
    prune an external edge $e = \set{u, v}$ of $N$ that is incident to leaf $v$ at $u$, regraft the resulting sprout to an edge of the resulting graph.
    \item[(\TBRP)] Subdivide an edge of $N$ with a new vertex $u$, subdivide an edge of the resulting graph with a new vertex $v$, and add the edge $e = \set{u, v}$.
    \item[(\TBRM)] Remove an edge $e$ of $N$.
\end{itemize}
Note that a \TBRZ can also be seen as the operation that prunes the edge $e = \set{u, v}$ at both $u$ and $v$ and then regrafts the two resulting sprouts. 
Hence, we say that a \TBRZ \emph{moves} the edge $e$. Furthermore, we say that a \TBRP \emph{adds} the edge $e$ and that a \TBRM \emph{removes} the edge $e$. 
TBR is illustrated in \cref{fig:unets:TBR}. 
Note that a \TBRZ has an inverse \TBRZ and that a \TBRP has an inverse \TBRM, 
and that furthermore a~\TBRP increases the reticulation number by one and a \TBRM decreases it by one.
On trees, \TBRZ equals the well known tree bisection and reconnection operation~\citep{AS01}, which is also where the acronym comes from. 

\begin{figure}[htb]
  \centering
  \includegraphics{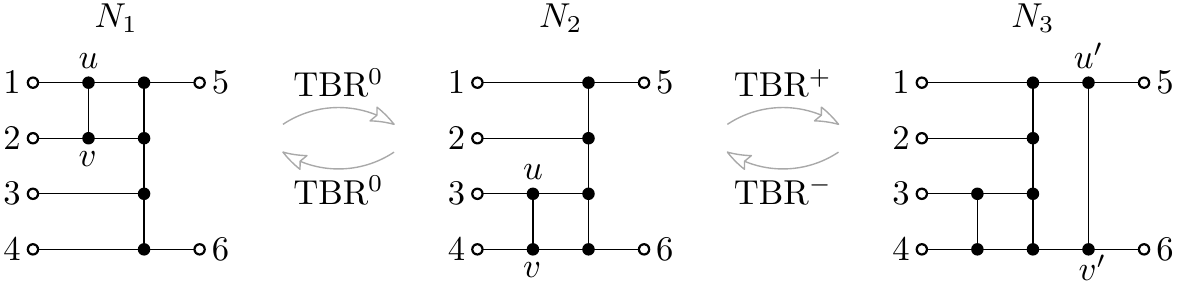}
  \caption{Illustration of the TBR operation. 
  The network $N_2$ can be obtained from $N_1$ by a \TBRZ that moves the edge $\set{u, v}$ 
  and the network $N_3$ can be obtained from $N_2$ by a \TBRP that adds the edge $\set{u', v'}$. 
  Each operation has its corresponding \TBRZ and \TBRM operation, respectively, that reverses the rearrangement.} 
  \label{fig:unets:TBR} 
\end{figure}

Since a \TBR operation has to yield a phylogenetic network, there are some restrictions on the edges that can be moved or removed. 
Firstly, if removing an edge by a \TBRZ yields a disconnected graph, then in order to obtain a phylogenetic network an edge has to be added between the two connected components. 
For similar reasons, a \TBRM cannot remove a cut-edge. 
Secondly, the suppression of a vertex when removing an edge with a \TBRM may not yield a loop $\set{u, u}$.  
Thirdly, removing or moving an edge cannot create a cut-edge that does not separate two leaves. Otherwise the resulting network would be improper.

Let $N \in \unets$. 
A \PR (\emph{prune and regraft}) operation is the rearrangement operation that transforms $N$ into a phylogenetic network $N' \in \unets$ with a \PRP $=$ \TBRP, a \PRM $=$ \TBRM, 
or a \PRZ that prunes and regrafts an edge $e$ only at one endpoint, instead of at both like a \TBRZ~\citep{JK19}. 
Like for TBR, we the say that the PR$^{0/+/-}$ \emph{moves/adds/removes} the edge $e$ in $N$. 
The PR operation is a generalisation of the well known SPR (\emph{subtree prune and regraft}) operation on unrooted phylogenetic trees~\citep{AS01}. 

\paragraph{Distances.}
\pdfbookmark[2]{Distances}{distances} 
Let $N, N' \in \unets$.
A \emph{\TBR-sequence} from $N$ to $N'$ is a sequence
    $$\sigma = (N = N_0, N_1, N_2, \ldots, N_k = N') $$
of phylogenetic networks such that $N_i$ can be obtained from $N_{i-1}$ by a single TBR for each $i \in \set{1, 2, ..., k}$. 
The \emph{length} of $\sigma$ is $k$.
The \emph{\TBR-distance} $\dTBR(N, N')$ between $N$ and $N'$ is the length of a shortest TBR-sequence from $N$ to $N'$.
The \PR-distance is defined analogously. 
\citet[Corollary 4.4]{JK19} have shown that the TBR- and PR-distance are well defined.

\paragraph{Embeddings and displaying.}
\pdfbookmark[2]{Embeddings}{embedding} 
Let $G$ be an undirected graph that is not necessarily simple; that is, $G$ may contain parallel edges and loops. 
An edge $\set{u, v}$ of $G$ is \emph{subdivided} if $\set{u, v}$ is replaced by a path form $u$ to $v$ that contains at least one edge. 
A \emph{subdivision} $G^*$ of $G$ is a graph that can be obtained from $G$ by subdividing edges of $G$. 
If $G$ has no degree two vertices, there exists a canonical mapping of vertices of $G$ to vertices of $G^*$ and of edges of $G$ to paths of $G^*$.

Let $N$ be an undirected graph, for example a network in $\unets$. 
Assume that $G$ is connected. 
We say~$G$ has an \emph{embedding} into $N$ if there exists a subdivision $G^*$ of $G$ that is a subgraph of $N$.
Now assume that $G$ has components $C_1, \ldots, C_k$. 
We say $G$ has an \emph{embedding} into $N$ if the components $C_i$ of $G$, for $i \in \{1, \ldots, k\}$, have embeddings into $N$
to pairwise edge-disjoint subgraphs of $N$. 
Note that these definitions imply that a labelled vertex of $G^*$ is mapped to a labelled vertex of $N$ with the same label.

We define a special type of embedding. 
Let $n$ vertices of $G$ be labelled bijectively with $X = \{1, 2, \ldots, n\}$. 
We say $G$ has an \emph{agreement embedding} into $N$ if there exists an embedding of $G$ into $N$ with the following properties.
\begin{itemize}
    \item The pairwise edge-disjoint embeddings of components of $G$ into $N$ cover all edges.
    \item At most two vertices of $G$ are mapped to the same vertex of $N$. 
    In the case that exactly two vertices are mapped to the same vertex of $N$, 
    one of these two vertices of $G$ is a sprout and the other is a labelled, isolated vertex.
    \item For each labelled vertex $v$ of $N$, there exists exactly one vertex $\bar v$ with the same label in $G$ and $\bar v$ is mapped to $v$.
\end{itemize}
We make the observation that having an agreement embedding into a graph is a transitive property.
\begin{observation} \label{clm:unets:AE:transitive}
Let $G$, $H$, $N$ be undirected graphs such that $G$ has an agreement embedding into $H$ and $H$ has an agreement embedding into $N$.\\
Then $G$ has an agreement embedding into $N$.
\end{observation}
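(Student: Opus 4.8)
The plan is to compose the two agreement embeddings directly and check that the composite meets all three defining properties. Write $\phi$ for the agreement embedding of $G$ into $H$ and $\psi$ for that of $H$ into $N$, realised by a subdivision $G^{*}$ of $G$ that (componentwise) is an edge-disjoint union of subgraphs of $H$ covering every edge of $H$, and by a subdivision $H^{*}$ of $H$ that is a subgraph of $N$ covering every edge of $N$. Since a subdivision of a subdivision is again a subdivision, I would take each connected piece $C_i^{*} \subseteq H$ (the $\phi$-image of a component $C_i$ of $G$) and apply $\psi$ to it; as every edge of $C_i^{*}$ is carried to a path in $N$, the result is a subdivision of $C_i^{*}$, hence of $C_i$, sitting inside $N$. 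This gives an embedding of $G$ into $N$.

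For edge-disjointness and covering I would argue routinely. Distinct edges of $H$ are carried by $\psi$ to internally edge-disjoint paths of $N$, and the $C_i^{*}$ are pairwise edge-disjoint in $H$, so their images in $N$ are pairwise edge-disjoint; moreover $\phi$ uses every edge of $H$ and $\psi$ uses every edge of $N$, so the composite covers all of $N$. The same-label condition is a short label chase: a labelled vertex $v$ of $N$ has a unique same-labelled preimage $\bar h$ in $H$, which has a unique same-labelled preimage $\bar v$ in $G$, and one checks that $\bar v$ is the unique same-labelled preimage of $v$ under the composite, using uniqueness at each stage and the fact that labels are preserved.

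The real obstacle is the ``at most two vertices, one sprout and one labelled isolated'' condition. Suppose three vertices $a,b,c$ of $G$ met at $w \in N$. Their $\phi$-images lie among at most two vertices of $H$ (by the condition for $\psi$), so by pigeonhole two of them, say $a,b$, share a $\phi$-image $h_1$. If all three share $h_1$ we contradict the ``at most two'' condition for $\phi$; otherwise we obtain distinct $h_1 \ne h_2$ of $H$ over $w$, so one is a sprout and the other labelled isolated, while $h_1$ simultaneously receives both a sprout and a labelled isolated vertex of $G$. Tracking the label forces $h_1$ to be the labelled isolated vertex of $H$, hence of degree $0$, yet the sprout of $G$ mapped to $h_1$ contributes an incident path and so gives $h_1$ positive degree, a contradiction. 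The same degree-and-label bookkeeping handles the case of exactly two vertices $a,b$ meeting at $w$: whether they share a $\phi$-image or map to distinct $h_1 \ne h_2$ over $w$, one concludes that a labelled isolated vertex sits over an isolated vertex and a sprout over the one incident edge, forcing the required dichotomy. The delicate point throughout is to exploit that a labelled vertex is isolated while a sprout carries exactly one incident path, so the coincidence patterns permitted at a single vertex propagate consistently through the composition.
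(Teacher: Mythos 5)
The paper gives no proof of this statement at all: it is labelled an Observation and treated as self-evident, so there is no argument of the author's to compare against. Your strategy of composing the two embeddings is certainly the intended one, and most of your verification is sound: the subdivision-of-a-subdivision point, edge-disjointness, covering, the label chase, and the pigeonhole argument ruling out three vertices of $G$ meeting at one vertex $w$ of $N$ are all correct and complete.

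There is, however, a genuine gap in your final subcase, which is in fact the generic one (it arises whenever a sprout is mapped to the same leaf as a labelled singleton): exactly two vertices $a, b$ of $G$ meet at $w$ via distinct vertices $h_1 \neq h_2$ of $H$, say $h_1$ a sprout of $H$ and $h_2$ a labelled isolated vertex of $H$. Your ``degree-and-label bookkeeping'' (degrees cannot increase under an embedding; labels are preserved downward) yields only that $a$ is unlabelled with $\deg_G(a) \leq 1$ and that $\deg_G(b) = 0$. That is not enough: it is perfectly consistent with this bookkeeping that $a$ or $b$ is an \emph{unlabelled isolated} vertex of $G$, in which case $\{a,b\}$ is not a sprout together with a labelled isolated vertex, and the composite embedding would violate the second condition. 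Excluding this requires the two properties of $\phi$ that your argument never invokes at this step. First, the covering condition: the unique edge of $H$ incident to $h_1$ is covered by some component of $G$, and since a degree-one vertex of $H$ cannot be an internal vertex of a subdivision path, some degree-one vertex $c$ of $G$ is mapped to $h_1$; if $c \neq a$, condition 2 for $\phi$ at $h_1$ would force $a$ to be a labelled isolated vertex, contradicting that $a$ is unlabelled (as $h_1$ is), so $a = c$ and $a$ is a sprout of $G$. Second, condition 3: $h_2$ carries a label, so exactly one vertex $b'$ of $G$ with that label is mapped to $h_2$; if $b' \neq b$, condition 2 at $h_2$ would require one of $b, b'$ to be a sprout, impossible since both have degree at most $\deg_H(h_2) = 0$; hence $b = b'$ is a labelled isolated vertex. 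With these two claims the dichotomy propagates and your proof closes; as written, it asserts their conclusions while citing only facts that do not imply them.
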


Let $N, N' \in \unets$.
We say $N'$ \emph{displays} $N$ if $N$ has an embedding into $N'$.
For example, in \cref{fig:unets:treeAndNetwork} the tree $T$ is displayed by both networks $N$ and $M$.

\section{Agreement graph and distance}
\label{sec:AG}

In this section we look at how agreement forests can be generalised for networks.
Throughout this section, let $N, N' \in \unets$ be in tier $r$ and $r'$, respectively.
Without loss of generality, assume that $r' \geq r$ and let~$l = r' - r$. 

Suppose there is a \TBRZ that transforms $N$ into $N'$ by moving an edge $e$.
This operation can be seen as removing $e$ from $N$, obtaining a graph $S$,
and then adding a new edge to $S$. 
We can interpret $S$ as the part of $N$ that remains unchanged or, in other words, $N$ and $N'$ agree on $S$.
In general, we are interested in finding a graph that requires the minimal number of edge removals from $N$ (or $N'$)
such that it has an embedding into $N$ and $N'$.
For two trees $T$ and $T'$ in $\unets$, this graph is precisely a \emph{maximum agreement forest (MAF)} $F$.
\citet{AS01} showed that the number of components of $F$ minus one
is exactly the \TBR-distance of $T$ and $T'$, 
or, equivalently, the minimum number of edges that have to be removed from $T$ (or $T'$) to obtain $F$.
If we consider again $N$ and $N'$, then the removal of an edge must not necessarily increase the number of components.
Therefore, instead of counting components, we are looking for a graph $G$ consisting of components on which $N$ and $N'$
agree on and of additional edges that can be embedded into $N$ and $N'$ such that all edges are covered.
In other words, we want that $G$ has an agreement embedding into $N$ and $N'$.
Note that if $N$ and $N'$ are in different tiers, then we need additional edges for an agreement embedding into $N'$.
We now make this precise. 

\paragraph{Agreement graph.}
Let $G$ be an undirected graph with connected components $S_1, \ldots, S_m$ and \linebreak[4]$E_1, \ldots, E_{k - l}, E_{k -l +1}, \ldots, E_{k}$
such that the $S_i$'s contain no sprouts and 
such that each $E_j$ consist of a single edge on two unlabelled vertices.
Then $G$ is an \emph{agreement graph} of $N$ and $N'$ if 
\begin{itemize}
    \item $G$ without $E_{k - l + 1}, \ldots, E_{k}$ has an agreement embedding into $N$, and
    \item $G$ has an agreement embedding into $N'$.
\end{itemize}
For such an agreement graph, we refer to an $S_i$ as \emph{agreement subgraph} and to an $E_j$ as a \emph{disagreement edge}.
A \emph{maximum agreement graph (MAG)} $G$ of $N$ and $N'$ is an agreement graph of $N$ and $N'$ with a minimal number of disagreement edges.
See \cref{fig:unets:AGexample1,fig:unets:AGexample2} for two examples.

\begin{figure}[htb]
  \centering
  \includegraphics{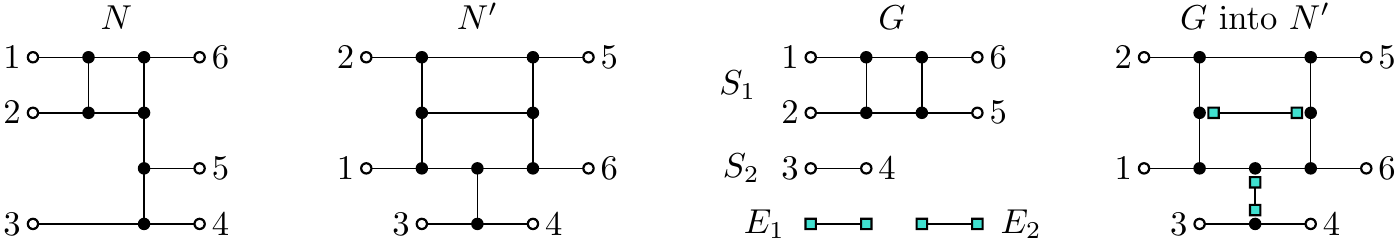} 
  \caption{A maximum agreement graph $G$ for $N, N' \in \unets$. On the right, how $G$ embeds into $N'$.
  Note that the disagreement edge $E_2$ is only needed for an agreement embedding into $N'$.}
  \label{fig:unets:AGexample1}
\end{figure}

\begin{figure}[htb]
  \centering
  \includegraphics{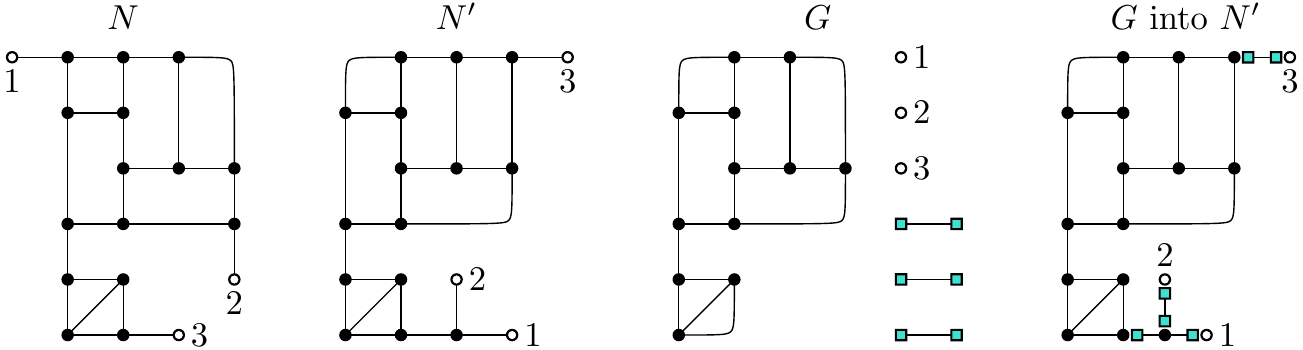} 
  \caption{A maximum agreement graph $G$ for $N, N' \in \unets$. On the right, how $G$ embeds into $N'$.
  Note that $G$ contains an agreement subgraph without labelled vertices.} 
  \label{fig:unets:AGexample2}
\end{figure}

Note that if $G$ contains $m$ agreement subgraphs, then it also contains at least $m-1$ disagreement edges since $N$ and $N'$ are connected graphs. 
Furthermore, unlike a MAF for two phylogenetic trees, $G$ may contain agreement subgraphs without any labelled vertices and $G$ may contain loops or parallel edges.

Let $T, T' \in \utrees$. Let $G$ be a maximum agreement graph of $T$ and $T'$.
Note that each agreement subgraph of $G$ is a tree.
Thus a (maximum) agreement graph of two trees is called a (maximum) agreement forest.
Note that the commonly used definition of agreement forests disregards disagreement edges~\citep{AS01}.

\paragraph{Attached sprouts.}
Let $V_N$ and $E_N$ be the vertex and edge set of $N$, respectively.
Let $G = (V_G, E_G)$ be an agreement graph of $N$ and $N'$.
Fix an agreement embedding of $G$ into $N$.
We say a sprout $\bar u \in V_G$ is \emph{attached to $\bar e \in E_G$ in $N$} 
if $\bar u$ is mapped to a vertex $u \in V_N$ that is an internal vertex of the path to which~$\bar e$ is mapped.
Suppose $G$ contains a labelled singleton $\bar x$.
We say $\bar u \in V_G$  is \emph{attached to $\bar x$ in $N$} if $\bar u$ and~$\bar x$ are mapped to the same leaf $u \in V_N$.
This terminology can be extended from sprouts to disagreement edges.
We say a disagreement edge $E_i$ is \emph{attached to an edge $\bar e \in E_G$ in $N$} if a sprout of $E_i$ is attached to $\bar e$ in $N$. 
Furthermore, we say $E_i$ is \emph{attached to} an agreement subgraph $S_{j}$ \emph{in $N$} 
if $E_i$ is attached to an edge of~$S_{j}$.

Note that a disagreement edge can be attached to itself. 
However, in general we would like to assume that an agreement embedding has nicer properties.
This is what we look at next.

\paragraph{Ordered agreement embedding.}
Let $G$ be an agreement graph of $N$ and $N'$ with agreement subgraphs $S_1, \ldots, S_m$ and disagreement edges $E_1, \ldots E_{k}$.
Then an agreement embedding of $G$ into $N'$ is an \emph{ordered agreement embedding} into $N'$ if
\begin{itemize}
    \item $E_1$ is attached to two distinct agreement subgraphs in $N'$,
    \item $E_i$ for $i \in \set{2, \ldots, m-1}$ is attached to 
        two distinct agreement subgraphs or an agreement subgraph and a disagreement edge $E_j$ with $j < i$ in $N'$
        such that the subgraph of $N'$ covered by $S_1, \ldots, S_m$ and $E_1, \ldots, E_i$ contains one connected component less than
        the subgraph of $N'$ covered by $S_1, \ldots, S_m$ and $E_1, \ldots, E_{i-1}$, 
    \item $E_i$ for $i \in \set{m, \ldots, k}$ is attached to agreement subgraphs or disagreement edges $E_j$ with $j < i$ in~$N'$.
\end{itemize}
An ordered agreement embedding of $G$ into $N$ is defined analogously but with the small difference 
that the third property only concerns the edges $E_{m+1}, \ldots, E_{k-l}$.

Note that the first and second property of an ordered agreement embedding imply that the vertices and edges covered by the agreement subgraphs 
and the disagreement edges $E_1, \ldots, E_{m-1}$ form a connected subgraph of $N$.
Moreover, in an ordered agreement embedding no disagreement edge is attached to itself.
We now prove that an agreement graph always has an ordered agreement embedding.

\begin{lemma}\label{clm:unets:orderedAE}
Let $N, N' \in \unets$ be in tiers $r$ and $r' \geq r$, respectively. Let $l = r' - r$.
Let $G$ be a maximum agreement graph for $N$ and $N'$ with $m$ agreement subgraphs.\\
Then $G$ minus $l$ disagreement edges has an ordered agreement embedding into $N$ 
and $G$ has an ordered agreement embedding into $N'$.
\end{lemma}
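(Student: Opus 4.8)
The plan is to begin from the two agreement embeddings that exist by the definition of an agreement graph and to upgrade them to ordered ones by relabelling the disagreement edges and, where necessary, locally rerouting the embeddings. I would carry out the construction for the embedding of $G$ into $N'$ in detail and then treat the embedding of $G$ minus $l$ disagreement edges into $N$ by the same argument with the indices truncated at $k-l$, as the definition requires. Throughout I use that an agreement embedding covers the whole host network and that $N$ and $N'$ are connected, so the images of $S_1, \ldots, S_m$ together with the disagreement edges form the entire, connected host.

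First I would set up the bookkeeping. Let $c$ be the sum of the cyclomatic numbers of the embedded agreement subgraphs. Adding a disagreement edge to the embedded structure either is a bridge, decreasing the number of components by one, or closes a cycle, increasing the cyclomatic number by one. Since the final structure is $N'$, which is connected with cyclomatic number $r'$, exactly $m-1$ of the $k$ disagreement edges are bridges and the remaining $k-m+1 = r'-c$ close cycles; the analogous count for $N$ yields $m-1$ bridges and $r-c$ cycle edges among $E_1, \ldots, E_{k-l}$, which is consistent since $(k-l)-(m-1) = r-c$. This classification dictates which edges must become the connecting edges $E_1, \ldots, E_{m-1}$ and which the later, cycle-closing ones. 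The core of the argument is then a reverse peeling: while the embedded structure has cyclomatic number larger than $c$, some cycle is not contained in a single agreement subgraph and hence runs through a disagreement edge, which is therefore a non-bridge. I remove one such edge, give it the largest index not yet used, and observe that both of its sprouts are attached to the surviving structure, which by construction consists only of agreement subgraphs and lower-indexed disagreement edges; this is exactly the third property, and it rules out self-attachment since the removed edge attaches to the other components rather than to itself. Repeating until the cyclomatic number drops to $c$ removes precisely $r'-c$ edges and leaves the $m-1$ bridges.

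It remains to order these $m-1$ bridges. Since the subgraphs together with the bridges have cyclomatic number $c$, contracting each agreement subgraph to a point turns them into a tree, and I would order the bridges by a growth from a fixed root subgraph so that $E_1$ joins two distinct agreement subgraphs and every later bridge attaches the current connected block to a new subgraph, decreasing the component count by one. This is where I expect the first real obstacle: a bridge may, in the given embedding, attach only to other disagreement edges and to no agreement subgraph, which the second property forbids. I would resolve this by rerouting -- reassigning the host edges of two incident disagreement-edge paths so that the bridge reaches into an adjacent agreement subgraph -- using the freedom in choosing the subdivision and the covering, and checking that the result is still an agreement embedding. With each bridge touching a subgraph, the growth order makes every connecting edge attach to a subgraph and, on its other end, to an earlier bridge or a second subgraph, giving the first and second properties.

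The second, and I think harder, obstacle is coordinating a single labelling that is ordered for both hosts at once. For $N$ I would run the same construction on the truncated edge set $E_1, \ldots, E_{k-l}$, choosing the $l$ removed edges to be the highest-indexed cycle edges of $N'$; the tier count above guarantees the split into $m-1$ connecting and $r-c$ cycle edges, and the definition's relaxation of the third property to $E_{m+1}, \ldots, E_{k-l}$ absorbs the one cycle edge whose $N'$-attachment may point at a now-removed top edge. Where removing the top $l$ edges severs an attachment used in $N'$, I would reroute onto the connected remainder, which still spans the host, and appeal to \cref{clm:unets:AE:transitive} to confirm that the modified maps remain agreement embeddings. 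Assembling the reverse peeling, the bridge growth, and these reroutings yields the two ordered agreement embeddings claimed.
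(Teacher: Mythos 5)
Your plan --- keep the given agreement embedding, classify its disagreement edges into bridges and cycle-closers, and obtain the ordering by re-indexing (reverse peeling plus a bridge growth), with rerouting only as a local patch --- breaks down at its central claim. In the peeling step you assert that a peeled edge's sprouts are ``attached to the surviving structure''; but attachments are a static feature of the fixed embedding, and they can be cyclic, in which case no re-indexing whatsoever works. Concretely, in a binary host the embedding may map disagreement edges $E_a$ and $E_b$ to trails $A=(a_1,b_2,a_2)$ and $B=(b_1,a_2,b_2)$, where $a_1$ and $b_1$ lie on paths of two agreement subgraphs and $a_2,b_2$ are joined by parallel edges: then $E_a$ is attached to a subgraph and to $E_b$, while $E_b$ is attached to a subgraph and to $E_a$. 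Whichever edge receives the smaller index violates property 1 or 2 (it touches only one subgraph and a \emph{higher}-indexed edge), and if it receives an index $\geq m$ it violates property 3; with three edges attached in a cycle ($E_a$ to $E_b$ to $E_c$ to $E_a$, each also touching a subgraph) even your stated rerouting trigger never fires, since every edge does touch an agreement subgraph. Your bookkeeping is also not order-independent: removing one disagreement edge's trail can drop the cyclomatic number by two (e.g.\ when both sprouts of another edge are attached to it), so the split into exactly $m-1$ bridges and $r'-c$ cycle-closers is not well defined, and the peeling can halt leaving an edge whose only attachments are to already-peeled edges.

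The paper avoids all of this by \emph{discarding} the given embedding of the disagreement edges and rebuilding it from scratch: it keeps only the images of the agreement subgraphs (coloured black), colours the rest of the host red, maps $E_1,\ldots,E_{m-1}$ greedily to red paths joining two distinct black components (merging them each time), and then maps the remaining disagreement edges to red trails found by depth-first search that start and end at black vertices; a counting argument shows this consumes exactly as many disagreement edges as $G$ has, so the result is an agreement embedding of the same $G$, ordered by construction. In other words, the ``rerouting'' you defer to is not a patch but the entire content of the proof, and it must be global, not local. Finally, your ``second obstacle'' is a non-issue: the disagreement edges are indistinguishable single edges on unlabelled vertices, so the embeddings into $N$ and into $N'$ may be ordered with independent indexings; this is exactly why the paper treats the two hosts by the same argument and simply assumes $l=0$.
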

\begin{proof}
The proof works the same for $N$ and $N'$, so for simplicity we may assume that $l = 0$.
Since $G$ is a MAG of $N$ and $N'$, there is an agreement embedding $\phi$ of $G$ into $N$. 
Let $N_i$, $i \in \set{1, \ldots, m}$, be the subgraphs of $N$ to which the agreement subgraphs of $G$ are mapped by $\phi$. 
Colour all vertices and edges contained in these $N_i$'s black, and all other vertices and edges red.
The red edges are thus the edges to which the disagreement edges of $G$ are mapped.
Note that the $N_i$'s are vertex-disjoint.
Hence, since $N$ is connected, it follows that the $N_i$'s are connected by red edges and paths.
We use this fact to construct an ordered agreement embedding $\phi'$ of $G$ into $N$.

For the ordered agreement embedding $\phi'$ map the agreement subgraphs of $G$ into $N$ like $\phi$.
Pick $N_i$ and~$N_j$ such that there is path $P$ from $N_i$ to $N_j$ with black end vertices and with red internal vertices and edges.
Such a choice is possible by the observations above.
Let $\phi'$ map $E_1$ to $P$.
Colour the edges and vertices of $P$ black, which makes $N_i$ and $N_j$ a single black subgraph $N_i$.
Repeat this process for $E_2, \ldots, E_{m-1}$. Note that this results in a single black component in $N$.
Hence, for the remaining disagreement edges $E_{m}, \ldots, E_k$ we require from $P$ only that it contains black end vertices and red internal vertices and edges,
but not that $P$ connects two distinct black components.
As long as there remain red edges, we can find such $P$ with a simple depth-first search in a red component that starts at a red edge incident to a black vertex and
ends at a red edge incident to another black vertex. (Note that a red vertex always has degree three and a black, non-leaf vertex has at least degree two.) 
Therefore this process ends with all edges of $N$ covered and coloured black.
From a simple counting argument we get that we constructed exactly as many disagreement edges as $G$ has.
Hence, by construction the embedding $\phi'$ is an ordered agreement embedding of $G$ into $N$. 
\end{proof}

We now define how to change an agreement embedding gradually. 
Let $G$ be a MAG of $N$ and $N'$.
Let~$\bar u$ and~$\bar v$ be two sprouts of $G$ with incident edges $\bar e = (\bar u, \bar w)$ 
and $\bar f = (\bar v, \bar z)$, respectively, such that $\bar u$ is attached to $\bar f$ in $N$.
Let $\bar e$ be mapped to the path $P = (y, \ldots, w)$ in $N$ and let $\bar f$ be mapped to the
path $P' = (x, \ldots, y, \ldots, z)$ in $N$.
Then an \emph{embedding change} of $G$ into $N$ \emph{with respect to $\bar u$ and $\bar v$} is the change
of the embedding such that $\bar e$ is mapped to the path $(x, \ldots, y, \ldots, w)$ formed by a
subpath of $P'$ and the path $P$, and such that $\bar f$ is mapped to the subpath $(y, \ldots, z)$ of $P'$; see \cref{fig:AD:embeddingChange}. 

\begin{figure}[htb]
\centering
  \includegraphics{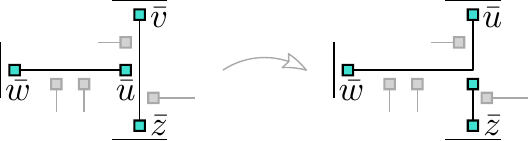}
  \caption{An embedding change with respect to $\bar u$ and $\bar v$.} 
  \label{fig:AD:embeddingChange}
\end{figure}

\paragraph{Agreement distance.}
Let $N, N' \in \unets$.
Let $G$ be a MAG of $N$ and $N'$ with $k$ disagreement edges.
We define the \emph{agreement distance} $\dAD$ of $N$ and $N'$ as 
    $$\dAD(N, N') = k \text{.}$$
Note that the agreement distance also equals half the number of sprouts of $G$.

\begin{theorem}\label{clm:AD:metric}
The agreement distance $\dAD$ on $\unets$ is a metric.
\end{theorem}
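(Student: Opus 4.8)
The plan is to verify the four metric axioms for $\dAD$ on $\unets$, with only the triangle inequality requiring real work. Non-negativity is immediate, since the number $k$ of disagreement edges of a MAG is non-negative. For the identity of indiscernibles, if $N = N'$ then $N$ itself, viewed as a single agreement subgraph with no disagreement edges, is an agreement graph, so $\dAD(N, N') = 0$; conversely, $\dAD(N, N') = 0$ forces $k = 0$, and since $m$ agreement subgraphs require at least $m - 1$ disagreement edges to be joined inside the connected networks $N$ and $N'$ (as already noted in the text), we get $m = 1$ and $l = r' - r = 0$, so the single agreement subgraph has an agreement embedding covering all edges of both $N$ and $N'$ with matching labels, whence $N \cong N'$. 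Symmetry holds by the tier-ordering convention: a MAG and its disagreement-edge count depend only on the unordered pair $\{N, N'\}$, and I would spell out that interchanging the two roles leaves $k$ unchanged.

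For the triangle inequality, let $N_1, N_2, N_3 \in \unets$ and let $G_{12}$ and $G_{23}$ be MAGs of $(N_1, N_2)$ and $(N_2, N_3)$ with $k_{12} = \dAD(N_1, N_2)$ and $k_{23} = \dAD(N_2, N_3)$ disagreement edges. By \cref{clm:unets:orderedAE}, both admit ordered agreement embeddings into the common network $N_2$ that cover all of its edges, and these are the two structures I would superimpose. The heart of the argument is to form a common refinement $H$ in $N_2$: mark every vertex of $N_2$ to which a sprout of $G_{12}$ or of $G_{23}$ is attached as a \emph{break point}; let the agreement subgraphs of $H$ be the maximal subgraphs of $N_2$ that lie inside an agreement subgraph of both $G_{12}$ and $G_{23}$ and contain no break point in their interior, and let the remaining edge-pieces become disagreement edges of $H$.

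It then remains to check that $H$ is a genuine agreement graph of $(N_1, N_3)$ and to bound its number of disagreement edges. By construction $H$ only refines $G_{12}$ and $G_{23}$, so $H$ has an agreement embedding into each of them; since $G_{12}$ and $G_{23}$ agreement-embed into $N_1$ and $N_3$, respectively, transitivity of agreement embeddings (\cref{clm:unets:AE:transitive}) yields agreement embeddings of $H$ into both $N_1$ and $N_3$, with the tier bookkeeping (which disagreement edges are needed only for the higher-tier embedding) handled by tracking the extra-edge sets separately for $N_1$ and for $N_3$. For the count I would argue through sprouts, using that $\dAD$ equals half the number of sprouts of a MAG: every break point of $H$ comes from a sprout of $G_{12}$ or of $G_{23}$, so the number of sprouts of $H$ is at most the sum of the numbers of sprouts of $G_{12}$ and $G_{23}$, giving at most $k_{12} + k_{23}$ disagreement edges. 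Because a MAG of $(N_1, N_3)$ minimises the number of disagreement edges, this gives $\dAD(N_1, N_3) \le k_{12} + k_{23} = \dAD(N_1, N_2) + \dAD(N_2, N_3)$.

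The main obstacle is the superposition step itself: making the common refinement $H$ well-defined and verifying every clause of the definition of an agreement graph, in particular that its agreement subgraphs are sprout-free and connected and that its disagreement edges are single edges on unlabelled vertices, while simultaneously keeping the sprout count and the per-tier extra-edge bookkeeping consistent when $N_1, N_2, N_3$ lie in three distinct tiers. I expect the delicate case to be cutting an agreement subgraph of one MAG at an attachment point of the other, since this must be reconciled both with the requirement that agreement subgraphs contain no sprouts and with the claim that each break point contributes no more sprouts to $H$ than it already accounts for in $G_{12}$ or $G_{23}$.
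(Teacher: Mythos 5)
Your high-level skeleton -- fix ordered agreement embeddings of $G_{12}$ and $G_{23}$ into the middle network $N_2$ via \cref{clm:unets:orderedAE}, build one combined graph, get embeddings into $N_1$ and $N_3$ by transitivity (\cref{clm:unets:AE:transitive}), and finish with a sprout count -- is exactly the paper's plan. But the one-shot superposition you propose for the combined graph has a genuine gap, and it sits precisely at the point you flag as ``delicate'' without resolving it. First, the refinement $H$ as defined is not an agreement graph at all. If a sprout of a disagreement edge of $G_{23}$ is attached at a vertex $v$ of $N_2$ lying in the \emph{interior} of the path to which an edge $\bar e$ of an agreement subgraph of $G_{12}$ is mapped, then cutting at the break point $v$ produces a piece whose vertex at $v$ is unlabelled and has degree one -- a sprout inside an agreement subgraph, which the definition forbids; and such a piece cannot be reclassified as a disagreement edge, since it may contain labelled leaves and branching vertices. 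The only repair consistent with the definitions is to delete the whole edge $\bar e$ (suppressing the resulting degree-two endpoints) and re-route the embeddings of the neighbouring components; this re-routing is what the paper's embedding changes do.

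Second, once you repair $H$ in that way, your counting claim breaks. A single disagreement edge of $G_{23}$ is embedded as a path $P$ in $N_2$ whose two endpoints can lie in the interiors of the embedding paths of two \emph{distinct} edges $\bar e_1 \neq \bar e_2$ of agreement subgraphs of $G_{12}$, and $P$ may in addition run along the embedded images of further edges of $G_{12}$; each such incidence forces one deletion, hence one new disagreement edge with two new sprouts. So ``every break point contributes no more sprouts to $H$ than it accounts for'' is false for the static refinement, and the bound $k_{12}+k_{23}$ does not follow. The paper's proof is sequential precisely to avoid this: it disassembles $N_2$ into $G_{23}$ one disagreement edge at a time via the sequence $(N' = M_0, M_1, \ldots, M_{k''} = G'')$, processing the disagreement edges in reverse order of the ordered embedding so that, at step $i$, the material being detached has already been collapsed (by suppression) into a \emph{single} edge $e$ of $M_{i-1}$; then $e$ lies in the embedding path of at most one edge of the evolving graph $G_{i-1}$, so at most one blue disagreement edge is created per step, and sprouts attached inside that path are absorbed by embedding changes rather than by further deletions (cases~(b) and~(c) in \cref{fig:unets:ADmetric}). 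This progressive collapsing, with the embeddings updated as you go, is the key device your static superposition is missing; without it (or an equivalent argument) the triangle inequality does not follow from your construction. Your treatment of the remaining axioms is fine.
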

  \begin{proof}
    Note that $\dAD$ is symmetric, non-negative, and for all $M, M' \in \unets$ $\dAD(M, M') = 0$ if and only if $M = M'$.
    Therefore, to show that $\dAD$ is a metric, it remains to show that $\dAD$ satisfies the triangle inequality.
	
	Let $N, N', N'' \in \unets$ and let $k' = \dAD(N, N')$ and $k'' = \dAD(N', N'')$.
	Let $G'$ (resp. $G''$) be a MAG of $N$ and $N'$ (resp. $N'$ and $N''$) with $k'$ (resp. $k''$) disagreement edges.
	To show that the triangle inequality holds, we construct an agreement graph $G$ for $N$ and $N''$ with at most $k' + k''$ disagreement edges.
	The following construction is illustrated with an example in \cref{fig:unets:ADmetricExample}.
	
	For simplicity, assume for now that $N$, $N'$, and $N''$ are in the same tier. 
	Fix ordered agreement embeddings of $G'$ and $G''$ into $N'$, which is possible by \cref{clm:unets:orderedAE}.
	Based on the ordered agreement embedding we can construct a length-$k''$ sequence of graphs $(N' = M_0, M_1, \ldots, M_{k''} = G'')$ from $N'$ to $G''$ where
	$M_i$ is obtained from $M_{i-1}$ for $i \in \set{1, \ldots, k''}$ by removing an edge and adding a disagreement edge.
	Note that $M_{i}$ has an agreement embedding into $M_{i-1}$ and thus by the transitive property of agreement embeddings (recall \cref{clm:unets:AE:transitive}) also an agreement embedding into $N'$.
	We use this sequence, to construct a sequence of graphs $(G' = G_0, G_1, \ldots, G_{k''} = G)$ 
	such that $G_i$ is obtained from~$G_{i-1}$ for~$i \in \set{1, \ldots, k''}$
	either by setting $G_i = G_{i-1}$ or by the removal of an edge of an agreement subgraph and adding a disagreement edge.
	First, colour the disagreement edges of $G_0$ red. 
	We will colour each newly added disagreement edge blue.
	Our construction will ensure the following properties:
	\begin{itemize}
	  \item The only sprouts of $G_i$ are in disagreement edges;
	  \item $G_i$ has an agreement embedding into $M_i$, 
	  \item each blue disagreement edge of $G_i$ is mapped to a disagreement edge of $M_i$.  
	\end{itemize}

	Suppose from $M_{i-1}$ to $M_{i}$ the edge $e$ gets removed and disagreement edge $F$ added. 
	We distinguish three cases, which are illustrated in \cref{fig:unets:ADmetric}~(a) to (c).
	First, if there is an edge $\bar e$ of $G_{i-1}$ that is mapped to $e$ by the agreement embedding of $G_{i-1}$ into $M_{i-1}$ 
	and that is not incident to a sprout, then obtain $G_i$ from $G_{i-1}$ by removing $\bar e$ and adding a blue disagreement edge $E_j$.
	This is shown in \cref{fig:unets:ADmetric}~(a) and in the step from $G_0$ to $G_1$ in \cref{fig:unets:ADmetricExample}.
	Note that $G_i$ has an agreement embedding in $M_i$ where $E_j$ is mapped to $F$.
	Clearly $G_i$ also has an agreement embedding into $G_{i-1}$ and thus by \cref{clm:unets:AE:transitive} into $N$ and $N'$.
	
	Second, suppose that an edge $\bar e$ of an agreement subgraph of $G_{i-1}$ is mapped to a path $P_e$ of $M_{i-1}$ that contains $e = \set{u, v}$.
	If $u$ (or $v$) lies within $P_e$, then a sprout of a disagreement edge of $G_{i-1}$ is attached to it.
	Note that this sprout belongs to a red disagreement edge since blue disagreement edges are mapped to edges that got removed in an earlier step.
	Obtain $G_i$ from $G_{i-1}$ by removing $\bar e$ and adding a blue disagreement edge $E_j$. 
	This case also applies in the step from $G_1$ to $G_2$ in \cref{fig:unets:ADmetricExample}.
	For the agreement embedding of $G_i$ into $M_i$ apply an embedding change (or embedding changes) as shown in \cref{fig:unets:ADmetric}~(b).
	Then $E_j$ is mapped onto $F$. 
    	
  \begin{figure}[htb]
    \centering
	\includegraphics{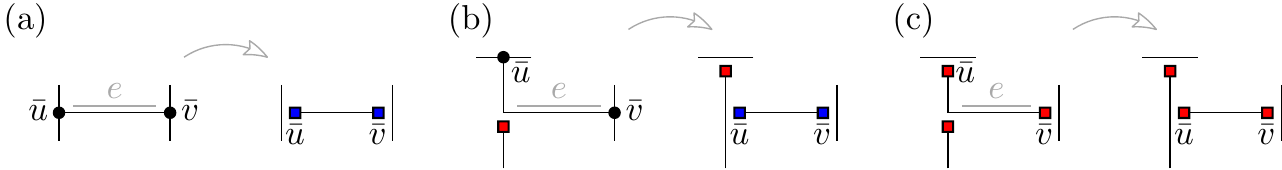}
    \caption{How to obtain $G_i$ (right) from $G_{i-1}$ (left) with respect to the agreement embedding of $G_{i-1}$ into $M_{i-1}$ 
    when an edge of an agreement subgraph is mapped precisely to $e$ (a) or to a path containing $e$ (b); 
    or when an disagreement edge is mapped to a path containing $e$ (c). 
    It is also shown how embedding changes are applied to show that $G_i$ has an agreement embedding into $M_i$.}
    \label{fig:unets:ADmetric}
  \end{figure}
  
  \begin{figure}[htb]   
    \centering
	\includegraphics{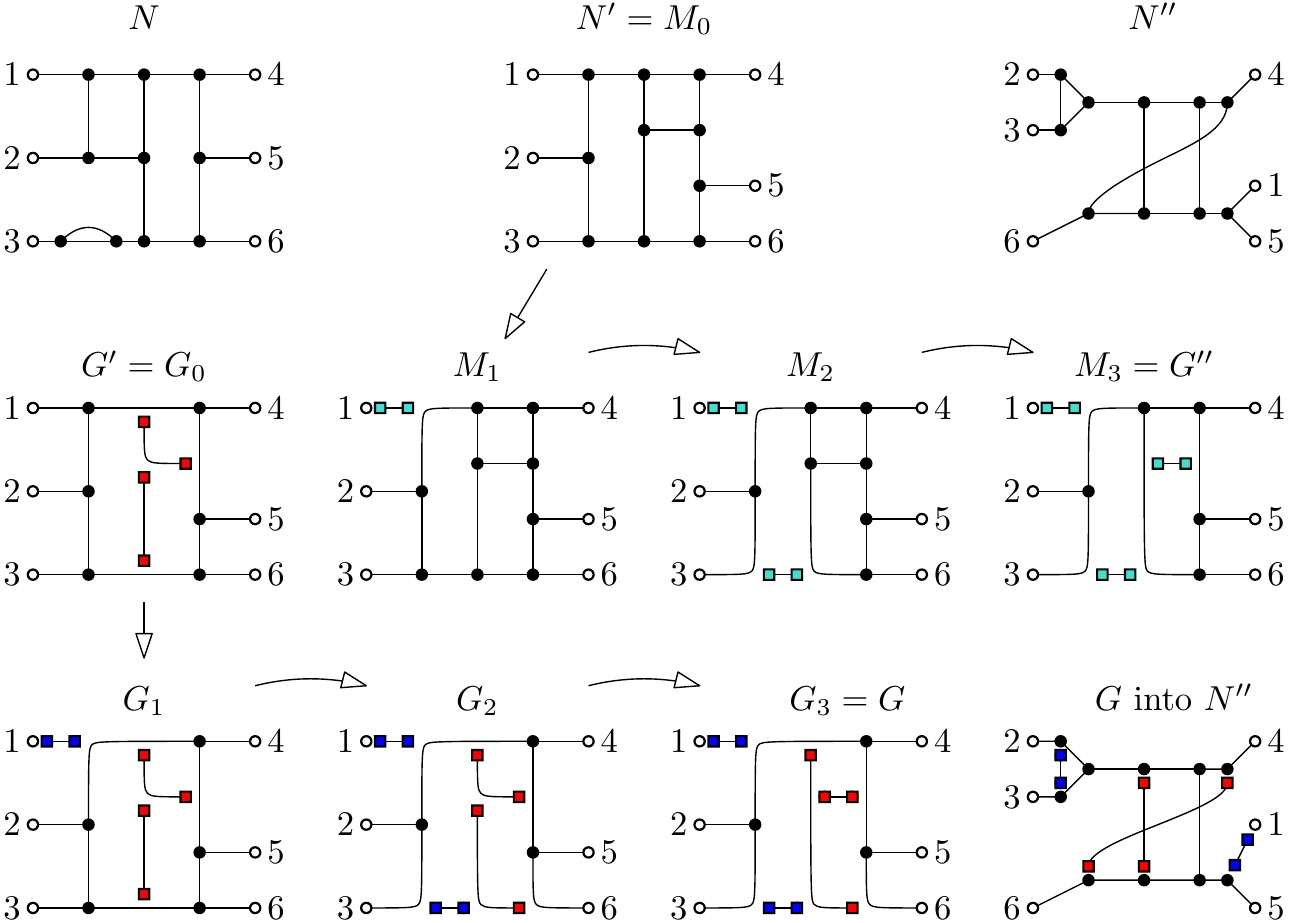}
    \caption{An example for the constructions of the sequences $(N' = M_0, M_1, \ldots, M_{k''} = G'')$ and 
    $(G' = G_0, G_1, \ldots, G_{k''} = G)$. Here, the networks $N$ and $N'$ have agreement distance 2 
    with maximum agreement graph $G'$;
    the networks $N'$ and $N''$ have agreement distance 3 with maximum agreement graph $G''$.
    Lastly, an agreement embedding of the constructed $G$ into $N''$ is shown.}
    \label{fig:unets:ADmetricExample}
  \end{figure}
	
	Third, suppose that a red disagreement edge $\bar e$ of $G_{i-1}$ is mapped to a path $P_e$ that contains $e = \set{u, v}$. 
	In this case set $G_i = G_{i-1}$. 
	To obtain an agreement embedding of $G_i$ into $M_i$ apply again appropriate embedding changes as shown in \cref{fig:unets:ADmetric}~(c)
	and in the step from $G_2$ to $G_3$ in \cref{fig:unets:ADmetricExample}.
	
	We claim that $G = G_{k''}$ is an agreement graph of $N$ and $N''$.
	By \cref{clm:unets:AE:transitive} $G$ has an agreement embedding into $G'$ and thus into $N$ (and $N'$).
	Furthermore, $G$ has an agreement embedding into~$M_{k''} =~G''$. Therefore, again by \cref{clm:unets:AE:transitive},
	we get that $G$ has an agreement embedding into $N''$.
	Concerning the components of $G$, note that $G$ contains precisely $k'$ red disagreement edges and at most $k''$ blue disagreement edges. 
	By construction these disagreement edges contain all sprouts of $G$. Hence, $G$ is an agreement graph of $N$ and $N''$ 
	that proves that $\dAD(N, N'') \leq k' + k'' = \dAD(N, N') + \dAD(N', N'')$.
	This concludes the proof for the case when $N$, $N'$, and $N''$ are in the same tier.
	
	Cases where $N$, $N'$, and $N''$ are in different tiers work analogously.
	However, when for example $N''$ is in a higher tier than $N'$ then in the construction of the sequence $(N' = M_0, M_1, \ldots, M_{k''} = G'')$ from 
	$N'$ to $G''$ we stop removing edges at some point and only add disagreement edges. These extra disagreement edges are only needed for the agreement embedding into $N''$ but not $N'$.
	The same applies then to the construction of $(G' = G_0, G_1, \ldots, G_{k''} = G)$. 
  \end{proof}

Next, we show that if we restrict the agreement distance to the space of phylogenetic trees, then it
equals the TBR-distance. 

\begin{proposition}\label{clm:AD:treeRestriction}
The agreement distance is equivalent to the \TBR-distance on $\utrees$.
\end{proposition}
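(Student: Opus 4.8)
The plan is to reduce the statement to the classical characterisation of the \TBR-distance by a maximum agreement forest due to \citet{AS01}: $\dTBR(T, T')$ equals $m^* - 1$, where $m^*$ is the minimum number of components of an agreement forest of $T$ and $T'$. Since any $T, T' \in \utrees$ lie in tier $0$, we have $l = 0$, so a maximum agreement graph $G$ of $T$ and $T'$ has agreement embeddings into both $T$ and $T'$ that use all of its disagreement edges. The key structural fact I would establish first is that, for trees, the number $k$ of disagreement edges of \emph{any} agreement graph with $m$ agreement subgraphs is exactly $m - 1$. The inequality $k \geq m - 1$ is already noted in the text. For the reverse inequality I would fix an ordered agreement embedding into $T$, which exists by \cref{clm:unets:orderedAE}: the disagreement edges $E_1, \ldots, E_{m-1}$ merge the $m$ vertex-disjoint subtree images of the agreement subgraphs into a single connected subgraph of $T$, while any further disagreement edge $E_i$ with $i \geq m$ attaches at both ends to this already connected subgraph. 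As its image is a path of positive length in $T$, such an edge would close a cycle, contradicting that $T$ is a tree; hence no such edge exists and $k = m - 1$.

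With this in hand I would prove the two inequalities. For $\dAD(T, T') \leq \dTBR(T, T')$, I start from an agreement forest with the minimum number $m^*$ of components and form the graph $G$ consisting of these components together with $m^* - 1$ disagreement edges (single edges on two unlabelled vertices). Because the forest embeds into $T$ as $m^*$ vertex-disjoint subtrees and $T$ is connected, contracting the subtrees yields a tree on $m^*$ nodes with $m^* - 1$ edges; mapping the disagreement edges onto the corresponding connecting paths gives an agreement embedding of $G$ into $T$ covering all edges, and the same construction with different connecting paths works for $T'$. Thus $G$ is an agreement graph with $m^* - 1$ disagreement edges, giving $\dAD(T, T') \leq m^* - 1 = \dTBR(T, T')$.

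For the converse, I take a maximum agreement graph $G$ with $k = \dAD(T, T')$ disagreement edges and $m$ agreement subgraphs, so $k = m - 1$ by the observation above. Deleting the disagreement edges, the agreement subgraphs are vertex-disjoint subtrees embedding into both $T$ and $T'$ that, by the embedding axioms, jointly carry every label exactly once; hence they form an agreement forest of $T$ and $T'$, and $m \geq m^*$. Therefore $\dAD(T, T') = k = m - 1 \geq m^* - 1 = \dTBR(T, T')$, and combined with the first inequality we obtain equality.

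The step I expect to require the most care is the structural claim $k = m - 1$ together with the clean identification of the agreement subgraphs of a maximum agreement graph with the components of an agreement forest. In particular I must rule out agreement subgraphs without labelled vertices: for a tree such a component, being connected and sprout-free, can only be a single unlabelled isolated vertex, and I would argue that deleting it and rerouting the incident disagreement edge yields an agreement graph with fewer disagreement edges, contradicting minimality. Once these trivial components are excluded, the correspondence between maximum agreement graphs and maximum agreement forests is exact and the cyclomatic argument closes the proof.
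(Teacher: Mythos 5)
Your proposal is correct and follows essentially the same route as the paper: both reduce the statement to Allen and Steel's maximum-agreement-forest characterisation of $\dTBR$ (their Theorem~2.13), using the fact that a maximum agreement graph of two trees is exactly a maximum agreement forest. The only difference is one of detail: the paper's proof merely asserts the key structural fact that an agreement graph of two trees with $k$ disagreement edges has $k+1$ agreement subgraphs, whereas you prove it (via ordered agreement embeddings and acyclicity) and also spell out both directions of the MAG--MAF correspondence, so your argument supplies details the paper leaves implicit.
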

  \begin{proof}
	Let $G$ be a maximum agreement forest of two trees $T, T' \in \utrees$.
    \citet{AS01} defined the function $m(T, T')$ as the number of agreement subgraphs of $G$ minus one.
	If $G$ contains $k$ disagreement edges, then it contains $k + 1$ agreement subgraphs.
	Thus, $\dAD(T, T') = m(T, T')$. By Theorem~2.13 of \citet{AS01}, $m(T, T') = \dTBR(T, T')$.
	This concludes the proof.
  \end{proof}

\citet{AS01} further showed that computing the TBR-distance of two phylogenetic trees is NP-hard.
\citet[Theorem 6.1]{JK19} showed that the TBR-distance of two trees in $\utrees$ is the same as in $\unets$. 
These two results together with \cref{clm:AD:treeRestriction} give us the following corollary.

\begin{corollary}\label{clm:MAF:NPhard}
Computing the agreement distance on $\unets$ is NP-hard.
\end{corollary}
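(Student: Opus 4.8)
The plan is to give a polynomial-time reduction from the problem of computing the \TBR-distance of two unrooted binary phylogenetic trees, which \citet{AS01} proved to be NP-hard, to the problem of computing the agreement distance on \unets. Since every phylogenetic tree is in particular a phylogenetic network (an element of \unets lying in tier~$0$), I expect this reduction to be essentially the identity map on instances, so the entire argument reduces to chaining together the results already established in the excerpt.

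Concretely, I would first phrase both problems in decision form: given $T, T' \in \utrees$ and an integer $k$, decide whether $\dTBR(T, T') \leq k$; and given $N, N' \in \unets$ and an integer $k$, decide whether $\dAD(N, N') \leq k$. The former is NP-hard by \citet{AS01}. To transfer this cleanly to the network setting I would invoke \citet[Theorem 6.1]{JK19}, which guarantees that the \TBR-distance measured within the tree space \utrees{} coincides with the \TBR-distance measured in the ambient network space \unets; this ensures that the NP-hardness result of \citet{AS01} applies to the network notion of $\dTBR$ when the inputs happen to be trees. Then, given an instance $(T, T', k)$ of the tree \TBR-distance decision problem, I would map it to the instance $(T, T', k)$ of the agreement-distance decision problem by simply reading $T$ and $T'$ as members of \unets. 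By \cref{clm:AD:treeRestriction} we have $\dAD(T, T') = \dTBR(T, T')$, so the two instances have identical yes/no answers; the map is computable in linear time, which completes the reduction and establishes NP-hardness of computing $\dAD$ on \unets.

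Since the reduction is just the identity on instances and the exact equality $\dAD = \dTBR$ on trees is already supplied by \cref{clm:AD:treeRestriction} (note that no constant-factor slack is introduced, so the threshold $k$ is preserved verbatim), I do not anticipate a genuine obstacle. The only point requiring care is a bookkeeping one: verifying that the $\dTBR$ appearing in \cref{clm:AD:treeRestriction} -- the network \TBR-distance as defined in the preliminaries -- indeed coincides with the classical tree \TBR-distance shown NP-hard by \citet{AS01}, which is precisely what \citet[Theorem 6.1]{JK19} provides. With that identification made explicit, the corollary follows immediately.
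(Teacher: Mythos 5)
Your proposal is correct and takes essentially the same route as the paper: the paper's proof is exactly the chain of \citet{AS01} (NP-hardness of tree \TBR-distance), \citet[Theorem 6.1]{JK19} (tree \TBR-distance in \utrees{} equals that in \unets), and \cref{clm:AD:treeRestriction} (equivalence of $\dAD$ and $\dTBR$ on \utrees). Your version merely makes the identity reduction and the decision-problem bookkeeping explicit, which the paper leaves implicit.
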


\section{Endpoint agreement graph and distance}
\label{sec:EAG}

While a \TBRZ prunes an edge at both ends, a \PRZ only prunes an edge at one side.
Hence, agreement graphs are not suited to model \PRZ. 
In this section we introduce endpoint agreement graphs as a slight modification of agreement graphs 
which model \PRZ more closely.
Let again $N, N' \in \unets$ be in tiers $r$ and $r'$, respectively, and let $l = r' - r$.

\paragraph{Endpoint agreement graph.}
Let $H$ be an undirected graph with connected components $S_1, \ldots, S_m$ and $E_1, \ldots, E_{l}$
such that each $E_j$ consist of a single edge on two unlabelled vertices.
Then $H$ is an \emph{endpoint agreement graph} (EAG) of $N$ and $N'$ if 
\begin{itemize}
    \item $H$ without $E_{1}, \ldots, E_{l}$ has an agreement embedding into $N$, and
    \item $H$ has an agreement embedding into $N'$.
\end{itemize}

We refer to an $S_i$ as \emph{(endpoint) agreement subgraph} and to an $E_j$ as a \emph{disagreement edge}.
A \emph{maximum endpoint agreement graph (MEAG)} $H$ of $N$ and $N'$ is an endpoint agreement graph of $N$ and $N'$ with a minimal number of sprouts.
See \cref{fig:unets:EAGexample1} for an example. Note that, unlike to MAG, in a MEAG also endpoint agreement subgraphs can contain sprouts. 
We define an \emph{ordered agreement embedding} of $H$ into $N'$ as an agreement embedding of $H$ into $N'$ such that
\begin{itemize}
  \item no sprout of an agreement subgraph is attached to a disagreement edge and
  \item and the disagreement edges can be ordered $(E_1, \ldots, E_l)$ such that $E_j$ may be attached to $E_i$ only if $i \leq j$.
\end{itemize}
For an ordered agreement embedding of $H$ into $N$ only the first property has to hold.

\begin{figure}[htb]
  \centering
  \includegraphics{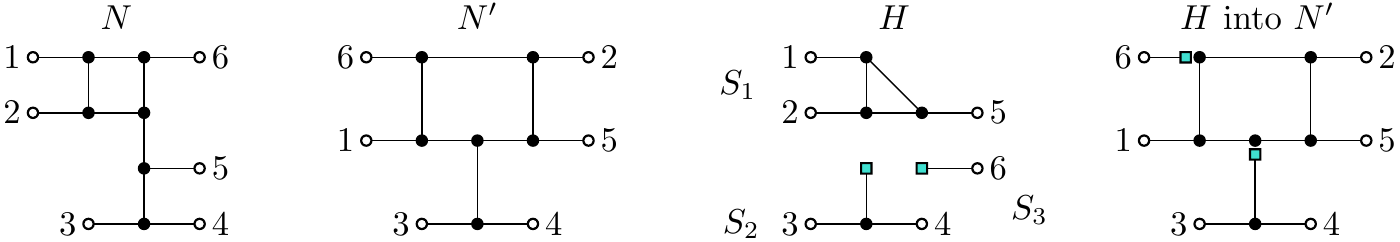} 
  \caption{A maximum endpoint agreement graph $H$ for $N, N' \in \unets$. On the right, how $H$ embeds into $N'$.} 
  \label{fig:unets:EAGexample1}
\end{figure}

A proof that ordered agreement embeddings exists works analogously to the proof of \cref{clm:unets:orderedAE} 
and the proof of Lemma 3.2 \citep{Kla19}, 
yet we outline the proof idea here. Starting with an agreement embedding of $G$ into $N'$, 
apply embedding changes to any sprout of an endpoint agreement subgraph that is attached to a disagreement edge. 
This way the first property can be enforced. 
For the second property, apply embedding changes if $E_i$ that is attached to a disagreement edge $E_j$, $j > i$, for $i \in \set{1, \ldots, l}$.

\paragraph{Endpoint agreement distance.}
Let $H$ be a MEAG of $N$ and $N'$.
Let $s$ be the number of sprouts of agreement subgraphs of $H$ 
and let $l$ be the number of disagreement edges of $H$.
We define the \emph{endpoint agreement distance} (EAD), denoted by $\dEAD$, of $N$ and $N'$ as 
    $$\dEAD(N, N') = s + l \text{.}$$
Following \citet{WM18} we use their replug operation to show that the EAD is a metric.

\paragraph{Replug distance.}
We define a \emph{replug network} $M$ on $X$ as an undirected multigraph 
such that the leaves and singletons are bijectively labelled with $X$ and all non-leaf vertices have degree three. 
Unlike for a phylogenetic network, $M$ may contain loops and be disconnected.
Let $\replugnets$ be the set of all replug networks on $X$.
Note that $\unets \subseteq \replugnets$.

Let $M \in \replugnets$. 
A \emph{replug operation} is the rearrangement operation that transforms $M$ into a replug network $M' \in \replugnets$ 
by pruning an edge at one vertex and then regrafting it again or by a vertical operation like a \PRP or a \PRM. 
Unlike for \PR, a replug operation does not have to ensure that the resulting network
is connected or proper.
 
Let $N, N' \in \unets$.
We define the \emph{replug distance} $\distR$ of $N$ and $N'$ as the distance of $N$ and $N'$ in $\replugnets$ under the replug operation.
Note that since $\unets$ is connected under \PR, it is also connected as subgraph of $\replugnets$ under the replug operation.
Therefore, the replug distance is well defined and a metric.
We now use the replug distance to prove that the endpoint agreement distance is a metric.

\begin{proposition} \label{clm:unets:EADreplug}
The endpoint agreement distance is equivalent to the replug-distance on $\unets$.
\end{proposition}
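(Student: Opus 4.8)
The plan is to prove the two inequalities $\distR(N, N') \le \dEAD(N, N')$ and $\dEAD(N, N') \le \distR(N, N')$ separately. As in the preceding definitions I may assume without loss of generality that $N$ lies in a tier no larger than that of $N'$ and set $l = r' - r$; a MEAG $H$ of $N$ and $N'$ then has exactly $l$ disagreement edges, some number $s$ of sprouts on its agreement subgraphs, and $\dEAD(N, N') = s + l$. The guiding idea is that each sprout on an agreement subgraph records a single endpoint of an edge that must be pruned and regrafted (a horizontal, single-endpoint \PRZ move), while each disagreement edge records an edge that must be added to climb one tier (a vertical \PRP move). Since a replug operation is exactly a single-endpoint prune-and-regraft together with the vertical moves, and since one is permitted to pass through disconnected or improper networks in $\replugnets$, the two quantities should match step for step.

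For the direction $\distR(N, N') \le \dEAD(N, N')$ I would argue constructively from a MEAG $H$. First I would fix ordered agreement embeddings of $H$ into $N$ and into $N'$; these exist by the same argument that proves \cref{clm:unets:orderedAE}, as already sketched in the outline preceding this proposition. Reading the two embeddings side by side, each of the $s$ agreement-subgraph sprouts pins down one edge-endpoint whose attachment point differs between $N$ and $N'$, and each of the $l$ disagreement edges is an edge present for $N'$ but absent for $N$. I would then transform $N$ into $N'$ by performing, in the order prescribed by the ordered embedding, one single-endpoint replug for each such sprout and one vertical \PRP for each disagreement edge. Because $\replugnets$ imposes no connectivity or properness constraint, every one of these $s + l$ operations is a legal replug, so $\distR(N, N') \le s + l = \dEAD(N, N')$.

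For the reverse inequality $\dEAD(N, N') \le \distR(N, N')$ I would start from a shortest replug sequence $N = M_0, M_1, \ldots, M_d = N'$ of length $d = \distR(N, N')$ and extract an endpoint agreement graph whose sprout-plus-disagreement-edge count is at most $d$. I would classify the $d$ operations as horizontal (a single-endpoint prune-and-regraft) or vertical (a \PRP or \PRM operation), track the edges of $N$ that are never pruned or removed throughout the whole sequence, and let the maximal such pieces become the agreement subgraphs; the endpoints detached by horizontal moves become the sprouts, and the edges left over by the net tier increase become the $l$ disagreement edges. The resulting graph $H$ then has an agreement embedding into $N'$ and, after deleting the disagreement edges, into $N$, so it is a genuine endpoint agreement graph with $s + l \le d$; since the MEAG is optimal, this yields $\dEAD(N, N') \le d$.

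The main obstacle will be this last extraction step, namely verifying that the untouched pieces really do carry agreement embeddings into both endpoints and that the sprout count is charged exactly once per horizontal move. The difficulty is that an optimal replug sequence may relocate the same edge-endpoint more than once, may subdivide and later suppress the same vertex, and passes through networks in $\replugnets$ that need be neither connected nor proper, so the clean one-sprout-per-operation correspondence has to be justified rather than assumed. I expect to handle this by an induction on $d$ that maintains, as an invariant, an agreement embedding of a partially built endpoint agreement graph into $M_i$ --- mirroring the sequence-tracking device used in the proof of \cref{clm:AD:metric} and the analogous argument of \citet{Kla19} --- so that each operation contributes at most one sprout or one disagreement edge, and arguing that in a shortest sequence \PRP and \PRM cancellations can be removed, forcing the number of disagreement edges down to exactly $l$.
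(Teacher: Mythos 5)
Your proposal is correct and follows essentially the same route as the paper: the easy direction builds a replug sequence from a MEAG via ordered agreement embeddings (one single-endpoint replug per sprout, one \PRP-like move per disagreement edge), and the hard direction converts a shortest replug sequence --- first purged of \PRM-like operations by the same cancellation argument you describe --- into an endpoint agreement graph by inductively maintaining an agreement embedding of a partially built graph into each $M_i$, charging at most one sprout or disagreement edge per operation. The obstacle you flag (repeatedly relocated endpoints, passing through improper or disconnected graphs) is handled in the paper exactly by the invariant you propose, using embedding changes when the pruned endpoint does not coincide with an endpoint of the embedded edge.
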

\begin{proof}
  Let $N, N' \in \unets$.
  We first prove that $\distR(N, N') \geq \dEAD(N, N')$. 
  Let $d = \distR(N, N')$ and let $\sigma = (N = M_0, M_1, \ldots, M_d = N')$ be a shortest replug sequence.
  Suppose that $N$ and $N'$ are in different tiers and that $N'$ is above $N$.
  Note that we may assume that $\sigma$ does not use any \PRM-like operation, 
  as such an operation and the next \PRP-like operation can be replaced with at most two replug operations that prune and regraft the same edge.
  We construct a sequence of graphs $(M_0 = H_0, H_1, \ldots, H_d)$ 
  such that $H_i$ has an agreement embedding into $M_i$ for $i \in \set{0, 1, \ldots, d}$.
  The construction will also ensure that~$H_i$ has an agreement embedding into $H_{i-1}$ (using the right number of disagreement edges)
   and thus by \cref{clm:unets:AE:transitive} also into $M_0 = N$.
  
  Suppose $M_i$ is obtained from $M_{i-1}$ by a horizontal replug operation $\theta$ that prunes the edge $e = \set{u, v}$ at $u$.
  Consider the agreement embedding of $H_{i-1}$ into $M_{i-1}$.
  Let $\bar e = \set{\bar u, \bar v}$ be the edge of $H_{i-1}$ that is mapped to a trail $P = (w_1, w_2, \ldots, w_k)$ containing $e$.
  Assume without loss of generality that $\bar u$ is mapped to $w_1$ and $\bar v$ to $w_k$. 
  (If $w_1 = w_k$, we further assume that $\bar e$ imposed with the directed $(\bar u, \bar v)$ is mapped from~$w_1$ towards $w_k$.)
  Now, if $w_1 = u$, that is, $\bar u$ is mapped to $u$, 
  then we can prune~$\bar e$ at~$\bar u$ in~$H_{i-1}$ (unless~$\bar u$ already is a sprout) to obtain $H_i$. 
  If $\bar u$ already is a sprout, set $H_i = H_{i-1}$.
  The agreement embedding of~$H_i$ into $M_i$ is derived from the agreement embedding of $H_{i-1}$ into $M_{i-1}$ 
  and how $\theta$ regrafts~$e$. 
  If~$w_1 \neq u$, a sprout $\bar x$ of $H_{i-1}$ is mapped to $u$.
  In this case, prune $\bar e$ at $\bar u$ (unless~$\bar u$ already is a sprout) to obtain $H_i$. 
  If~$\bar u$ already is a sprout, set $H_i = H_{i-1}$.
  Then apply an embedding change with respect to $\bar x$ and $\bar u$ to obtain an agreement embedding into $M_{i-1}$.
  Derive an agreement embedding into~$M_i$ as in the previous case.
  Clearly $H_i$ has an agreement embedding into $H_{i-1}$.
  
  Next, suppose $M_i$ is obtained from $M_{i-1}$ by a vertical replug operation that adds the edge $e = \set{u, v}$ by
  subdividing the edges $f$ and $f'$. Obtain $H_i$ from $H_{i-1}$ by adding a disagreement edge
  and obtain an agreement embedding of $H_i$ into $M_i$ by mapping the disagreement edge to $e$.
  
  At the end of the sequence, $H_d$ is an endpoint agreement graph of $N$ and $M_d = N'$.
  Since we added at most $d$ sprouts or disagreement edges, it follows that $\distR(N, N') \geq \dEAD(N, N')$.
  
  We now prove that $\dEAD(N, N') \geq \distR(N, N')$.
  Let $H$ be a maximum endpoint agreement graph of $N$ and $N'$.
  Fix ordered agreement embeddings of $H$ into $N$ and $N'$, i.e., no sprout of an agreement subgraph is attached to a disagreement edge.
  Based on agreement embeddings of $H$ into $N$ and $N'$ it is straightforward to use a replug operation
  for each sprout of $H$ to prune an edge of $N$ (or a resulting network) and regraft it according to the agreement embedding of $H$ into $N'$.
  Lastly, if $N'$ is in a tier above $N$, use a \PRP-like replug operation for each disagreement edge of $H$ 
  to add an edge according to the agreement embedding of $H$ into $N'$.
\end{proof}

\begin{corollary} \label{clm:unets:EAD:metric}
The endpoint agreement distance on $\unets$ is a metric.
\end{corollary}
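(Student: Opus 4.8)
The plan is to obtain this corollary essentially for free from the preceding proposition, since all the substantive work has already been carried out there. Recall that in the paragraph defining the replug distance we already observed that $\distR$ is a metric on $\unets$: the set $\unets$ is connected under \PR, hence also connected as a subgraph of $\replugnets$ under the replug operation, so $\distR$ is a well-defined graph distance and therefore automatically satisfies the four metric axioms (symmetry, non-negativity, identity of indiscernibles, and the triangle inequality). This is the only external fact I would need, and it is stated and justified earlier in the excerpt.

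With that in hand, the single remaining ingredient is \cref{clm:unets:EADreplug}, which asserts that the endpoint agreement distance and the replug distance are equivalent on $\unets$. Unpacking its proof, the two inequalities $\distR(N, N') \geq \dEAD(N, N')$ and $\dEAD(N, N') \geq \distR(N, N')$ together yield $\dEAD(N, N') = \distR(N, N')$ for every pair $N, N' \in \unets$; that is, the two functions coincide pointwise. So I would first record this equality as the starting point of the argument.

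Next I would invoke the elementary observation that a function equal to a metric is itself a metric: each metric axiom is a statement purely about the values the function takes on pairs (or triples) of points, so if $\dEAD$ and $\distR$ agree everywhere, then $\dEAD$ inherits symmetry, non-negativity, the equivalence $\dEAD(N,N')=0 \iff N = N'$, and the triangle inequality directly from $\distR$. Combining this with the equality furnished by \cref{clm:unets:EADreplug} completes the proof.

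I do not expect any genuine obstacle here, since the difficulty was concentrated entirely in establishing the equivalence of \cref{clm:unets:EADreplug}; once ``equivalent'' is read as ``equal as functions on $\unets$'', the metric property transfers mechanically. The only point I would double-check is that the replug distance really is a metric rather than merely a pseudometric, but this is guaranteed because a shortest-path distance in a connected graph (here the replug graph restricted to $\unets$) satisfies all four axioms, with the identity of indiscernibles holding precisely because distinct networks correspond to distinct vertices.
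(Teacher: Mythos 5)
Your proposal is correct and matches the paper's approach exactly: the paper states this as an immediate corollary of \cref{clm:unets:EADreplug} together with its earlier remark that the replug distance is well defined and a metric (since $\unets$ is connected under \PR and hence under the replug operation), which is precisely the transfer-of-metric-axioms argument you give.
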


\citet{WM18} showed that the endpoint agreement distance (or rather the replug distance) does not always equal
the SPR-distance of two trees.
Furthermore, they conjectured that computing the endpoint agreement distance is NP-hard for trees.
This and whether it is NP-hard to compute the endpoint agreement distance of two networks remains open.

\section{Relations of distances}
\label{sec:bounds}

In this section we look at the relations of the metrics induced by MAG, MEAG, TBR, and PR.
We start by comparing the agreement distance with the TBR-distance.
As we have seen in \cref{clm:AD:treeRestriction}, they are equivalent on $\utrees$.
Furthermore, we can make the following observations.

\begin{observation} \label{clm:unets:AD:one}
Let $N, N' \in \unets$. Then $\dAD(N, N') = 1$ if and only if $\dTBR(N, N') = 1$.
\end{observation}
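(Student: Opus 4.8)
The plan is to prove the two directions of the equivalence separately, exploiting the fact that for two networks differing by a single rearrangement, the unchanged part of the network is almost the entire network.

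First I would prove the backward direction: if $\dTBR(N, N') = 1$, then $\dAD(N, N') = 1$. Since there is a single \TBRZ, \TBRP, or \TBRM transforming $N$ into $N'$, I would exhibit an explicit agreement graph with exactly one disagreement edge. For a \TBRZ that moves an edge $e$, let $S$ be the graph obtained from $N$ by removing $e$; this $S$ (with its sprouts suppressed appropriately, i.e.\ kept as the agreement subgraphs) together with a single disagreement edge $E_1$ gives an agreement graph, since $S$ embeds into both $N$ and $N'$ and the disagreement edge covers the moved edge in each. The \TBRP and \TBRM cases are analogous, with the single disagreement edge accounting for the one-edge difference in tier. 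In each case the resulting graph is an agreement graph with exactly one disagreement edge, so $\dAD(N, N') \leq 1$; since $N \neq N'$ forces $\dAD(N, N') \geq 1$, equality follows.

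For the forward direction, suppose $\dAD(N, N') = 1$, so a MAG $G$ of $N$ and $N'$ has exactly one disagreement edge $E_1$ and (by the counting remark) exactly two sprouts. I would argue that the structure of $G$ is forced: $G$ consists of at most two agreement subgraphs together with $E_1$, and by \cref{clm:unets:orderedAE} I may take ordered agreement embeddings of $G$ (minus $l$ disagreement edges) into $N$ and of $G$ into $N'$. The single disagreement edge, attached at its two sprouts, records exactly one pruned-and-regrafted (or added, or removed) edge. Reading off where $E_1$ attaches in $N$ versus in $N'$ then exhibits the single TBR operation (a \TBRZ when $r = r'$, a \TBRP or \TBRM when the tiers differ by one) that transforms $N$ into $N'$, giving $\dTBR(N, N') \leq 1$, and again $N \neq N'$ gives the reverse inequality.

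The main obstacle will be the forward direction, specifically verifying that a single disagreement edge in an \emph{ordered} agreement embedding genuinely corresponds to one legal TBR move rather than a more complicated rearrangement. I would need to check that the two attachment points of $E_1$ in $N$ and in $N'$ differ in a way realizable by a single TBR, and that properness and connectivity constraints on TBR (the restrictions noted after the definition of TBR) are respected — for instance that moving the single edge does not create an improper cut-edge. I expect this case analysis, driven by whether $E_1$ attaches to agreement subgraphs or to a leaf and by the tier difference $l \in \set{0, 1}$, to be the crux; the backward direction should be routine by explicit construction.
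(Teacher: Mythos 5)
The paper offers no proof of this statement at all --- it is stated as an \emph{observation}, i.e.\ as something immediate from the definitions --- so the only question is whether your argument is sound. Your backward direction is correct and essentially complete: a single \TBRZ, \TBRP or \TBRM yields an explicit agreement graph with one disagreement edge, and $N \neq N'$ forces $\dAD(N,N') \geq 1$. (Two small touch-ups: the \emph{removal} suboperation already suppresses the resulting degree-two vertices, so there are no ``sprouts to suppress'' in $S$; and when the \TBRZ moves an external edge, the leaf must be kept as a labelled singleton agreement subgraph so that the sprout of $E_1$ can attach to it in the embedding.)

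In the forward direction you have correctly located the crux but not carried it out, and as written ``reading off where $E_1$ attaches'' is not yet a proof. The missing structural fact is: \emph{in any agreement embedding of a MAG $G$ with a single disagreement edge $E_1$, the path $P$ to which $E_1$ is mapped (in $N$, and likewise in $N'$) is a single edge}. To see this, suppose $P$ had an internal vertex $v$; then $v$ has degree three, and its third incident edge must be covered by some agreement subgraph $S_i$ (edge-disjointness prevents $E_1$ itself, and singletons have no edges), so $v$ is a degree-one vertex of the subdivision $S_i^*$, hence the image of a degree-one vertex of $S_i$; since agreement subgraphs contain no sprouts this vertex is labelled, and labelled vertices are mapped to leaves of $N$, contradicting that $v$ has degree three. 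With this fact, counting gives at most two agreement subgraphs (exactly one when $l=1$), and deleting the image edge of $E_1$ from $N$ and from $N'$ leaves, after suppression, the \emph{same} graph, namely the disjoint union of the agreement subgraphs; the single \TBRZ (when $l=0$), respectively \TBRP/\TBRM (when $l=1$, where $G$ minus $E_1$ is a single agreement subgraph isomorphic to $N$, so $N'$ displays $N$ and one may alternatively invoke \cref{clm:unets:AD:displaying}), is then immediate. Finally, your worry about properness and connectivity restrictions is a red herring: a TBR operation is by definition any such edge replacement whose result lies in $\unets$, and here the result is $N'$ itself, which is proper by assumption; and if deleting the image of $E_1$ disconnects $N$, that is precisely the situation a \TBRZ is allowed to handle, since the re-added edge reconnects the two components.
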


\begin{lemma}\label{clm:unets:AD:displaying}
Let $N, N' \in \unets$ be in tiers $r$ and $r'$, respectively, such that $N'$ displays $N$. Let $l = r' - r$.\\
Then $\dAD(N, N') = \dTBR(N, N') = l$.
\end{lemma}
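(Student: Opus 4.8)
The plan is to sandwich both quantities between $l$ and $l$. First note that displaying forces $r'\ge r$: a subdivision of $N$ is a connected subgraph of $N'$ and subdividing does not change the cyclomatic number, so $r\le r'$ and $l=r'-r\ge 0$. I would then prove the four bounds $\dTBR(N,N')\ge l$, $\dTBR(N,N')\le l$, $\dAD(N,N')\ge l$ and $\dAD(N,N')\le l$, all anchored to one fixed subdivision $N^{\ast}$ of $N$ that is a subgraph of $N'$ (which exists because $N'$ displays $N$). Call the edges of $N'$ outside $N^{\ast}$ the \emph{extra} edges; since $N^{\ast}$ already contains every leaf, each maximal path of extra edges joins two vertices of $N^{\ast}$, so extra edges only ever create cycles.

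For the TBR-distance, the lower bound is immediate: a single TBR changes the tier by $-1$, $0$ or $+1$, so any \TBR-sequence from $N$ to $N'$ has length at least $\abs{r'-r}=l$. For the upper bound I would peel the extra edges off $N'$ one at a time, at each stage removing an extra edge lying on a cycle (which exists as long as the current cyclomatic number exceeds $r$, i.e.\ as long as extra edges remain); such an edge is not a cut-edge, so connectivity is preserved, yielding a chain $N'=N_l\supseteq\dots\supseteq N_0=N$ in which each $N_i$ is obtained from $N_{i-1}$ by adding one extra edge. Reversing the chain turns every step into a \TBRP. The key point, and the main obstacle, is that these intermediate graphs must be \emph{proper} networks for the operations to be valid. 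I would establish this by an \emph{additive} argument rather than by controlling the dangerous removals directly: if $A$ is a proper connected network and $B=A+e$ is connected, then $B$ is proper, because any cut-edge $c$ of $B$ cannot be crossed by $e$ (otherwise $e$ would keep $B-c$ connected), so $c$ is already a cut-edge of $A$ separating the same two leaf sets, and $A$ proper forces both sides to contain a leaf. Starting from the proper network $N_0=N$ and adding edges upward, induction gives that every $N_i$ is proper; hence the reversed chain is a valid length-$l$ \TBRP-sequence and $\dTBR(N,N')\le l$.

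For the agreement distance, the upper bound comes from the agreement graph $G$ whose single agreement subgraph is a copy of $N$ and which carries $l$ disagreement edges: $G$ minus these $l$ edges is just $N$ and embeds into $N$, while in $N'$ the copy of $N$ is mapped to $N^{\ast}$ and the $l$ disagreement edges cover exactly the extra edges. That exactly $l$ disagreement edges suffice follows from the depth-first covering argument of \cref{clm:unets:orderedAE}: with a single black component $N^{\ast}$ every added disagreement edge covers a red path between black vertices and raises the cyclomatic number by one, so $l$ of them reach $r'$ and cover everything. Thus $\dAD(N,N')\le l$. For the matching lower bound I would take a MAG $G$ with $k$ disagreement edges and $m$ agreement subgraphs and fix ordered agreement embeddings via \cref{clm:unets:orderedAE}. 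In the embedding into $N'$ the agreement-subgraph images are vertex-disjoint with cyclomatic numbers $c_1,\dots,c_m$ (each intrinsic to the subgraph, hence the same in both networks); the first $m-1$ disagreement edges merge components without changing the cyclomatic number and the remaining $k-(m-1)$ each create one cycle, so $r'=\sum_i c_i+k-(m-1)$. The analogous count for $G$ minus its top $l$ disagreement edges embedded into $N$ gives $r=\sum_i c_i+(k-l)-(m-1)$. Subtracting recovers $l=r'-r$, and since the number $k-l-(m-1)$ of cycle-creating edges in $N$ is nonnegative, $k\ge l+(m-1)\ge l$. Hence $\dAD(N,N')\ge l$, and combining the bounds yields $\dAD(N,N')=\dTBR(N,N')=l$.

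The remaining technicalities I expect to be routine: when peeling I must avoid choosing an extra edge whose suppression would create a loop, which is always possible; and the ordered-embedding counting must note that distinct agreement subgraphs share no vertex (the agreement-embedding condition only allows a sprout to coincide with a labelled singleton), so their images really are vertex-disjoint and the cyclomatic bookkeeping is valid.
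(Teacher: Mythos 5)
Your outline (four bounds, $l \le \dTBR \le l$ and $l \le \dAD \le l$) is reasonable, and three of the four bounds go through: the tier argument for $\dTBR(N,N') \ge l$ is fine; your construction for $\dAD(N,N') \le l$ (a copy of $N$ plus $l$ disagreement edges, with the black/red covering argument of \cref{clm:unets:orderedAE}) is essentially the paper's construction, except that the paper reads the agreement embedding into $N'$ off a \TBRP-sequence rather than re-running the covering argument; and $\dAD(N,N') \ge l$ is in fact immediate from the definition of an agreement graph (the indexing $E_1,\ldots,E_{k-l},\ldots,E_k$ already forces $k \ge l$), so your cyclomatic bookkeeping, while correct, is not needed. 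The crucial difference is that the paper does not prove $\dTBR(N,N') = l$ at all: it cites Corollary 5.6 of \citet{JK19}. Your peeling argument is exactly the step that replaces this citation, and that is where the gap lies.

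The gap is your claim that avoiding an extra edge whose removal creates a loop ``is always possible''. Read greedily, step by step, this is false, and nothing in your argument constrains the order of removals. Concretely: let $N$ be any phylogenetic tree (tier $0$), subdivide two distinct edges of $N$ with new vertices $w$ and $q$, and add four new vertices $u,a,v,p$ together with the seven edges $\set{u,p}, \set{p,a}, \set{u,a}, \set{u,v}, \set{a,v}, \set{v,w}, \set{p,q}$. The resulting network $N'$ is proper, lies in tier $3$, and displays $N$. Removing $\set{p,q}$ is a legal first step by your rules: it lies on a cycle, and no loop arises since $p$'s other neighbours are $u \neq a$ and $q$'s are the two distinct ends of the subdivided edge. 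But after suppressing $p$ and $q$ you are left with a subdivision of $N$ plus a blob on $\set{u,a,v}$ -- a doubled edge $\set{u,a}$ together with $\set{u,v}$ and $\set{a,v}$ -- attached to the rest only by $\set{v,w}$. Now every remaining extra edge is bad: $\set{v,w}$ is a cut-edge, so it may not be removed; removing $\set{u,v}$ leaves $u$ with two parallel edges to $a$, whose suppression creates a loop at $a$ (symmetrically for $\set{a,v}$); and removing either copy of $\set{u,a}$ makes both $u$ and $a$ degree two, and their suppression turns $\set{u,v}$ and $\set{a,v}$ into a loop at $v$. So the procedure is stuck at a graph that still carries two extra cycles. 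Note also that the stuck graph is improper, and your additive-properness lemma cannot prevent this: it certifies properness of the intermediate graphs only after the chain has been completed down to $N$, so it gives you no invariant to steer the peeling while it is in progress. What is missing is precisely the content of the cited result of \citet{JK19}: from any proper network that properly displays $N$ there exists a \TBRM that creates no loop, keeps the network proper, and keeps $N$ displayed. Establishing that existence (equivalently, choosing the peeling order with foresight rather than greedily) is the real work, and without it your upper bound on $\dTBR$, and hence the lemma, is not proved.
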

\begin{proof}
The second equality follows from Corollary 5.6 by \citet{JK19}.
The equality also implies that there is a \TBRP-sequence $\sigma$ of length $l$ from $N$ to $N'$.
Let $G$ be the graph obtained from $N$ by adding $l$ disagreement edges.
Then $G$ without its disagreement edges has an agreement embedding into $N$ 
and we can obtain an agreement embedding into $N'$ from $\sigma$ straightforwardly. 
Hence, $G$ is a MAG of $N$ and $N'$, which proves the first equality.
\end{proof}

\pagebreak[4]
\begin{lemma}\label{clm:unets:AD:treeNetwork}
Let $T \in \utrees$ and $N \in \unetsr$. 
Then $\dAD(T, N) = \dTBR(T, N)$.
\end{lemma}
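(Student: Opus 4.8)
The plan is to prove the two inequalities $\dAD(T, N) \le \dTBR(T, N)$ and $\dTBR(T, N) \le \dAD(T, N)$ separately. For the first I would argue exactly as in the proof of \cref{clm:AD:metric}: fix a shortest \TBR-sequence $(T = N_0, N_1, \ldots, N_d = N)$ with $d = \dTBR(T, N)$ and build graphs $H_0, \ldots, H_d$ so that each $H_i$ has an agreement embedding into $N_i$ and into $H_{i-1}$, hence by \cref{clm:unets:AE:transitive} into $T$. Each step $N_{i-1}\to N_i$ either adds a disagreement edge (for a \TBRP) or, when it moves or removes an edge currently covered by an agreement subgraph of $H_{i-1}$, is matched by cutting that edge into a fresh disagreement edge; in every case at most one disagreement edge is created. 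Then $H_d$ is an agreement graph of $T$ and $N$ with at most $d$ disagreement edges, giving $\dAD(T, N) \le \dTBR(T, N)$. This direction does not use that $T$ is a tree.

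The interesting inequality is $\dTBR(T, N) \le \dAD(T, N)$, and here tree-ness is essential. Let $G$ be a MAG of $T$ and $N$ with $k = \dAD(T,N)$ disagreement edges $E_1, \ldots, E_k$ and $m$ agreement subgraphs $S_1, \ldots, S_m$, and fix ordered agreement embeddings into $T$ and $N$ by \cref{clm:unets:orderedAE}; recall $l = r$. The embedding into $T$ uses $S_1, \ldots, S_m$ together with $E_1, \ldots, E_{k-l}$, and its image is all of $T$. Since each $S_i$ embeds into the tree $T$ it is itself a tree, and the $S_i$ are pairwise vertex-disjoint, so joining the $m$ agreement subgraphs by the $k-l$ disagreement edges to obtain the connected, acyclic graph $T$ must reduce the component count from $m$ to $1$ without ever closing a cycle. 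This forces $k - l = m - 1$, i.e. $k = (m-1) + l$; this counting step is the first key point and is precisely where the tree assumption enters.

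Next I would extract the intermediate tree $T'$ obtained from $N$ by deleting the last $l$ disagreement edges $E_{k-l+1}, \ldots, E_k$ and suppressing degree-two vertices. By the ordering these edges attach only to earlier components, so their removal keeps the graph connected and proper, and $T'$ is a phylogenetic tree displayed by $N$; \cref{clm:unets:AD:displaying} then gives $\dTBR(T', N) = l$. On the other hand $\{S_1, \ldots, S_m\}$ embeds edge-disjointly into both $T$ and $T'$, since both are built from the same agreement subgraphs joined by $E_1, \ldots, E_{m-1}$ in two different configurations, so it is an agreement forest of $T$ and $T'$ with $m$ components; by \cref{clm:AD:treeRestriction} this yields $\dTBR(T, T') \le m - 1$. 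Combining through the triangle inequality for the \TBR-metric,
\[
  \dTBR(T, N) \le \dTBR(T, T') + \dTBR(T', N) \le (m-1) + l = k = \dAD(T, N),
\]
which is the required bound, and together with the first inequality gives equality.

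The main obstacle is making the middle paragraph fully rigorous: deriving $k = m-1+l$ from the embedding into $T$, and verifying that deleting $E_{k-l+1}, \ldots, E_k$ genuinely produces a proper binary phylogenetic tree $T'$ displayed by $N$ on the full label set $X$, with the agreement subgraphs still edge-disjointly embedded after the suppressions. Once $T'$ is available the argument is a clean sandwich between \cref{clm:unets:AD:displaying} and the tree case \cref{clm:AD:treeRestriction}. This also clarifies why the equality is special to the tree-network case: replacing $T$ by a non-tree network invalidates both the counting $k = m-1+l$ and the agreement-forest bound, so only the constant-factor estimates survive.
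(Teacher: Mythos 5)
Your proposal is correct, and it splits into two halves that compare differently with the paper. For the harder inequality $\dTBR(T,N) \leq \dAD(T,N)$ you do essentially what the paper does, only with the details written out: the paper simply states that, from an ordered agreement embedding of a MAG $G$ into $N$, the graph $G$ with $k-r$ disagreement edges ``embeds onto a tree $T'$ displayed by $N$,'' and then chains \cref{clm:AD:treeRestriction}, \cref{clm:unets:AD:displaying} and the triangle inequality exactly as you do. Your counting argument $k = (m-1)+l$ and your verification that deleting the images of the last $l$ disagreement edges (and suppressing degree-two vertices) yields a proper binary tree on $X$ are precisely the steps the paper leaves implicit, and they are sound; in particular, the worry you flag about labelled singletons is resolved by the connectivity clause in the definition of an ordered agreement embedding, which forces the edge incident to such a leaf to be covered by one of $E_1,\ldots,E_{m-1}$. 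For the other inequality $\dAD(T,N) \leq \dTBR(T,N)$ your route genuinely differs from the paper's: the paper does not redo any sequence-to-agreement-graph construction but instead invokes Theorem 4.13 of \citet{JK19}, which supplies a tree $T'$ displayed by $N$ with $\dTBR(T,N) = \dTBR(T,T') + \dTBR(T',N)$, and then applies the triangle inequality for $\dAD$ (available from \cref{clm:AD:metric}) together with \cref{clm:AD:treeRestriction} and \cref{clm:unets:AD:displaying}. Your construction along a shortest \TBR-sequence is instead exactly the argument of the paper's later and fully general \cref{clm:unets:AD:TBRrelation:lower}. Both are valid and neither creates circularity; the paper's route is shorter given the external decomposition theorem, while yours is more self-contained and in fact establishes the stronger statement $\dAD \leq \dTBR$ for arbitrary pairs of networks, which the paper needs later anyway.
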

\begin{proof}
	\citet[Theorem 4.13]{JK19} showed that there is tree $T'$ that is displayed by $N$ 
	such that $\dTBR(T,N) = \dTBR(T,T') + \dTBR(T',N)$.
	The tree $T'$ is thus a tree that minimises the TBR-distance to $T$ among all trees displayed by $N$.
	From \cref{clm:AD:treeRestriction} and \cref{clm:unets:AD:displaying} we thus get that $$\dAD(T, N) \leq \dAD(T, T') + \dAD(T', N) = \dTBR(T,T') + \dTBR(T',N) = \dTBR(T,N)\text{.}$$
	For the converse direction, consider a maximum agreement graph $G$ of $T$ and $N$ with $k$ disagreement edges.
	From an ordered agreement embedding of $G$ into $N$, we get that $G$ with $k - r$ disagreement edges embeds onto a tree $T'$ displayed by $N$.
	Hence, $$\dAD(T, N) \geq \dAD(T, T') + \dAD(T, N) = \dTBR(T, N)\text{.}$$  
\end{proof}

After these three cases, where the agreement distance and the TBR-distance are equivalent, 
we show with the following example that this is in general not the case.

\begin{figure}[htb]
  \centering
  \includegraphics{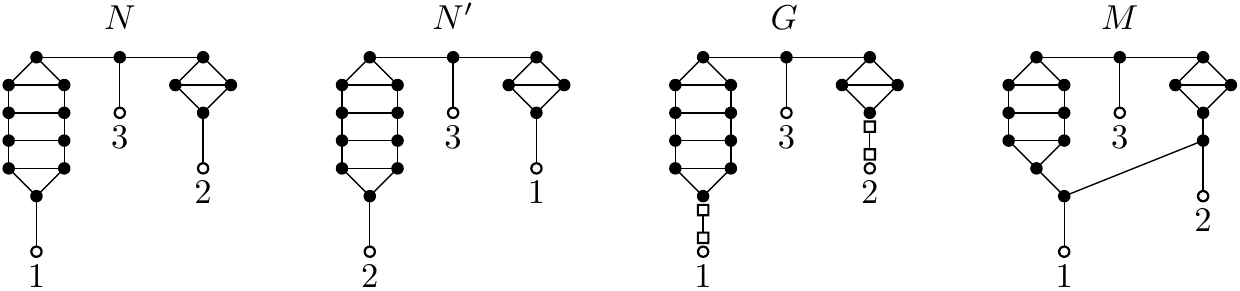}
  \caption{Two networks $N, N' \in \unets$ with $\dAD(N, N') = 2$, but with $\dTBR(N, N') = 3$ as proven in \cref{clm:ADunequalTBR} (for example with a TBR-sequence via $M$).
  The graph $G$ illustrated with an agreement embedding into $N$ is a MAG of $N$ and $N'$.}
  \label{fig:unets:ADunequalTBR}
\end{figure}

\begin{lemma}\label{clm:ADunequalTBR}
The networks $N$ and $N'$ in \cref{fig:unets:ADunequalTBR} have $\dAD(N, N') = 2$, $\dEAD(N, N') = 2$, and $\dTBR(N, N') = 3$.
\end{lemma}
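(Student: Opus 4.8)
The plan is to prove each of the three equalities by sandwiching it between a matching upper and lower bound, and to order the arguments so that the only genuinely difficult step is isolated. All three upper bounds are constructive. For $\dAD(N,N') \le 2$ I would verify that the graph $G$ drawn in \cref{fig:unets:ADunequalTBR} really is an agreement graph with two disagreement edges: that $G$ minus its disagreement edges has an agreement embedding into $N$ (the embedding already shown in the figure) and that $G$ itself has an agreement embedding into $N'$. For $\dTBR(N,N') \le 3$ I would spell out the explicit three-step TBR-sequence through the network $M$ named in the caption. For $\dEAD(N,N') \le 2$ I would, by way of \cref{clm:unets:EADreplug}, exhibit a replug sequence of length two from $N$ to $N'$ (equivalently, an endpoint agreement graph whose sprout-plus-disagreement-edge count is two).

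For the lower bounds on $\dAD$ and $\dEAD$ I would first record the two easy non-adjacency facts, each a single-step inspection of the pictures: $N$ and $N'$ are not TBR-neighbours (no single \TBRZ, \TBRP, or \TBRM turns $N$ into $N'$), and they are not replug-neighbours (no single replug operation does so). The first fact, together with $N \ne N'$ and \cref{clm:unets:AD:one} read contrapositively (so that $\dTBR \ne 1$ forces $\dAD \ne 1$), gives $\dAD(N,N') \ge 2$, and with the upper bound yields $\dAD(N,N') = 2$. The second fact, together with $N \ne N'$ and \cref{clm:unets:EADreplug} (which identifies $\dEAD$ with $\distR$), gives $\distR(N,N') \ge 2$, and with the upper bound yields $\dEAD(N,N') = 2$.

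The crux is the lower bound $\dTBR(N,N') \ge 3$, that is, ruling out every TBR-sequence of length at most two; note that I cannot shortcut this through $\dAD = 2$, since the inequality $\dAD \le \dTBR$ is not yet available at this point and in any case would only give $\dTBR \ge 2$. Length one is already excluded above, so what remains is $\dTBR \ne 2$: I must show that no network obtainable from $N$ by a single TBR is itself a TBR-neighbour of $N'$. Because $N$ and $N'$ are small and lie in a fixed tier, the one-TBR neighbourhood of $N$ is finite, so in principle this is a finite case analysis; the cleaner route I would hunt for is a numerical invariant — for example a feature of the reticulation cycle measured against the labelled leaves — that each TBR changes by at most one while $N$ and $N'$ differ in it by three, which would force $\dTBR \ge 3$ at a stroke. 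Pinning down such an invariant, or else organising the neighbourhood analysis so that it is not a brute-force enumeration, is the main obstacle; everything else reduces to a drawn construction or a single non-adjacency check.
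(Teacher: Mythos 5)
Your upper bounds and your lower bounds for $\dAD$ and $\dEAD$ are sound and essentially follow the paper's route: the graph $G$ from \cref{fig:unets:ADunequalTBR} certifies $\dAD(N,N') \le 2$, the sequence through $M$ certifies $\dTBR(N,N') \le 3$, two replug operations swapping leaves 1 and 2 certify $\dEAD(N,N') \le 2$, and the non-adjacency checks combined with \cref{clm:unets:AD:one} and \cref{clm:unets:EADreplug} give the matching lower bounds of two. Your observation that you may not invoke $\dAD \le \dTBR$ here, since that inequality (\cref{clm:unets:AD:TBRrelation:lower}) is established only later, is also correct, and your framing of the length-two exclusion (ruling out all TBR types, not just \TBRZ) is if anything more careful than the paper's.

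However, there is a genuine gap at precisely the step you yourself flag as the crux: $\dTBR(N,N') \ge 3$, i.e., excluding every length-two sequence. You offer two strategies but execute neither: a finite enumeration of the one-TBR neighbourhood of $N$ (which you dismiss as brute force and do not carry out), and a hypothetical numerical invariant that changes by at most one per TBR while differing by three between $N$ and $N'$ (which you do not exhibit; since $\dTBR(N,N')$ equals exactly three, any such invariant would have to be perfectly tight, and it is far from clear one exists -- the paper uses none). So the lemma is not proven. The paper closes this step with a short structural argument tied to the specific networks in the figure: transforming $N$ into $N'$ requires two independent changes, namely swapping the leaves 1 and 2 and transforming the two biconnected components into each other, and one argues that two \TBRZ operations cannot accomplish both while staying inside $\unets$ (this is also why the exhibited length-three sequence spends one move restructuring a biconnected component, one move swapping the leaves in the intermediate network $M$, and one move restructuring again). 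That "two independent obstructions cannot both be resolved by two moves" argument is the missing idea; organising your neighbourhood analysis around these two features of $N$ and $N'$, rather than hunting for an invariant, is what would complete your proof.
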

\begin{proof}
Concerning the agreement distance, observe that $\dAD(N, N') > 1$.
Next, note that the graph $G$ in \cref{fig:unets:ADunequalTBR} has agreement embeddings into $N$ and $N'$.
This also yields an agreement embedding of $G$ into~$N'$ by swapping the singletons labelled 1 and 2.
Hence, $G$ with two disagreement edges is a MAG of $N$ and $N'$, which proves that $\dAD(N, N') = 2$.

Concerning the endpoint agreement distance, we see that the leaves 1 and 2 can be swapped with two replug operations.

Concerning the TBR-distance, observe that there is no length two \TBRZ-sequence from $N$ to $N'$.
This can be seen as with only two \TBRZ the leaves 1 and 2 cannot be swapped nor can the two biconnected components be transformed into each other within $\unets$.
To see that $\dTBR(N, N') = 3$, note that $\dTBR(N, M) = 1$ and
that the leaves 1 and 2 can be swapped with a single \TBRZ in $M$ resulting in a network $M'$ with $\dTBR(M', N') = 1$.
\end{proof}

Next, we show that the agreement distance provides a lower and an upper bound on the TBR-distance
of any two networks $N$ and $N'$.

\begin{lemma} \label{clm:unets:AD:TBRrelation:lower}
Let $N, N' \in \unets$.
Then $$\dAD(N, N') \leq \dTBR(N, N') \text{.}$$
\end{lemma}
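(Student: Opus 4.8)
The plan is to reduce the bound to two facts that are already available: that $\dAD$ is a metric (\cref{clm:AD:metric}), and that a single TBR step is worth exactly one unit of agreement distance (\cref{clm:unets:AD:one}). First I would fix a shortest TBR-sequence $\sigma = (N = N_0, N_1, \ldots, N_k = N')$ of length $k = \dTBR(N, N')$. By definition each $N_i$ is obtained from $N_{i-1}$ by a single TBR, and since a TBR genuinely alters the network we have $\dTBR(N_{i-1}, N_i) = 1$ for every $i$. Applying \cref{clm:unets:AD:one} to each consecutive pair then gives $\dAD(N_{i-1}, N_i) = 1$.

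The second step is a straightforward telescoping argument. Since $\dAD$ satisfies the triangle inequality, an easy induction on $k$ yields
$$\dAD(N, N') \;\leq\; \sum_{i=1}^{k} \dAD(N_{i-1}, N_i) \;=\; \sum_{i=1}^{k} 1 \;=\; k \;=\; \dTBR(N, N')\text{,}$$
which is exactly the claimed inequality. Conceptually this matches the intuition from the introduction: a shortest TBR-sequence touches at most one edge per step, so the total number of disagreement edges needed to record the changes cannot exceed the number of steps.

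I do not expect a serious obstacle here, since the genuinely hard work has already been done in establishing \cref{clm:AD:metric} and \cref{clm:unets:AD:one}. The one point worth checking explicitly is that \cref{clm:unets:AD:one} applies uniformly to every kind of TBR step, including the tier-changing \TBRP and \TBRM operations and not merely the horizontal \TBRZ; this is fine because the observation is stated for arbitrary $N, N' \in \unets$ without any tier restriction. As an alternative to the metric route, one could instead build an agreement graph directly by recording, along $\sigma$, the edge that each operation moves, adds, or removes as a disagreement edge and leaving the untouched parts as agreement subgraphs; this produces a witness agreement graph with at most $k$ disagreement edges. The telescoping argument is cleaner, however, and I would present that as the main proof, relegating the direct construction to a remark if desired.
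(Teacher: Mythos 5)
Your proof is correct, but it takes a genuinely different route from the paper's. The paper argues directly: it walks along a shortest \TBR-sequence $(N = M_0, M_1, \ldots, M_d = N')$ and maintains, step by step, a graph $G_i$ with agreement embeddings into $M_0$ and $M_i$, using a case analysis (whether the affected edge is mapped to an edge of an agreement subgraph or to a disagreement edge, and whether it is mapped precisely to $e$ or to a longer path) together with embedding changes to show each \TBR costs at most one new disagreement edge; this reuses the machinery from the proof of \cref{clm:AD:metric}. You instead factor everything through two previously established facts -- the triangle inequality from \cref{clm:AD:metric} and the one-step equivalence in \cref{clm:unets:AD:one} -- and telescope along the shortest sequence. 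Your reduction is shorter and cleaner, and it does handle the tier-changing \TBRP and \TBRM steps since \cref{clm:unets:AD:one} carries no tier restriction. What the paper's direct construction buys is self-containedness: \cref{clm:unets:AD:one} is stated without proof, and the direction you invoke ($\dTBR(N_{i-1},N_i) = 1 \Rightarrow \dAD(N_{i-1},N_i) = 1$) is exactly the $d=1$ case of the lemma you are proving, so to avoid any appearance of circularity you should note that the observation is verified directly (a single \TBR that moves, adds or removes an edge $e$ yields an agreement graph with one disagreement edge, as sketched at the start of \cref{sec:AG}), not deduced from this lemma. Two minor points of polish: a \TBR need not \emph{a priori} alter the network, so either argue that consecutive networks in a shortest sequence are distinct (else the sequence could be shortened) or simply use $\dAD(N_{i-1}, N_i) \leq 1$ throughout, which suffices for the telescoping; and your closing remark that one could instead record each moved edge as a disagreement edge is essentially the paper's proof, but it is less routine than you suggest, because the moved edge may lie inside a path to which an agreement-subgraph edge is embedded -- this is precisely why the paper needs its embedding-change arguments.
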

\begin{proof}
Let $d = \dTBR(N, N')$ and let $\sigma = (N = M_0, M_1, \ldots, M_d = N')$ be a \TBR-sequence from $N$ to $N'$.
To prove the lemma, we show how to obtain an agreement graph $G$ of $N$ and $N'$ with at most $d$ disagreement edges from $\sigma$.

We construct a sequence of graphs $(N = G_0, G_1, \ldots, G_d = G)$ 
such that $G_i$ is an agreement graph of $M_0$ and $M_i$ for $i \in \set{0, 1, \ldots, d}$. 
This holds trivially for $i = 0$. 
In the following, when we consider agreement embeddings of $G_i$ into $M_0$ and $M_i$ where $M_0$ and $M_i$ are in different tiers 
then we ignore, for simplicity, that one of the embeddings needs less disagreement edges.

Suppose $M_i$ is obtained from $M_{i-1}$ by a \TBRZ that moves the edge $e = \set{u, v}$.
Let $\bar e$ be the edge of~$G_{i-1}$ that is mapped to a path $P$ that contains $e$ 
by the agreement embedding of $G_{i-1}$ into $M_{i-1}$. 
We distinguish four cases, namely whether $\bar e$ is part of an agreement subgraph and whether $P$ contains only~$e$.
(They are comparable to the cases in the proof of \cref{clm:AD:metric}; see also \cref{fig:unets:ADmetric} again.)
\begin{enumerate}
  \item Assume that $\bar e$ is part of an agreement subgraph and mapped precisely to $e$.
Then obtain $G_i$ by removing $\bar e$ and adding a disagreement edge.
Clearly $G_i$ has an agreement embedding into $N$ and~$M_i$.
  \item Assume that $\bar e$ is part of an agreement subgraph and $P$ has length at least two.
Further assume without loss of generality that neither $u$ nor $v$ is an end vertex of $P$.
Then there there are sprouts $\bar u$ and $\bar v$ that are attached to $\bar e$ in $M_{i-1}$ 
and that are mapped to $u$ and $v$, respectively. 
Again obtain $G_i$ by removing $\bar e$ and adding a disagreement edge $\set{\bar x, \bar y}$.
However, for an agreement embedding of $G_i$ into $M_{i-1}$ map $\set{\bar x, \bar y}$ to $P$ 
and then apply embedding changes with respect to $\bar x$ and $\bar u$ and with respect to $\bar y$ and $\bar v$.
We can derive from this an agreement embedding of $G_i$ into $M_i$.
\item Assume that $\bar e$ is a disagreement edge and mapped precisely to $e$.
Then set $G_i = G_{i-1}$ and it is straightforward to obtain agreement embeddings.
\item Last, assume that $\bar e$ is a disagreement edge and $P$ has length at least two.
There are then again without loss of generality two sprouts attached to $\bar e$.
Set $G_i = G_{i-1}$ and obtain an agreement embedding of $G_i$ into $M_{i-1}$ (and $M_i$) by applying embedding changes as in the second case.
Hence, in either case, we obtain an agreement graph $G_i$ of $M_0$ an $M_i$.
\end{enumerate}
 
Next, suppose $M_i$ is obtained from $M_{i-1}$ by a \TBRM that removes the edge $e = \set{u, v}$.
Like for a \TBRZ, if an edge $\bar e$ of an agreement graph is mapped to $e$,
we obtain $G_i$ from $G_{i-1}$ by removing~$\bar e$. Otherwise we set $G_i = G_{i-1}$.
Furthermore, if $M_{i-1}$ is in a higher tier than $M_0$ we also remove a disagreement edge.
Using again embedding changes if sprouts were attached to $\bar e$, 
it is straightforward to construct an agreement embedding of $G_i$ into $M_i$. 
Thus $G$ is an agreement graph of $M_0$ and $M_i$.

Lastly, suppose that $M_i$ is obtained from $M_{i-1}$ by a \TBRP.
If $M_{i}$ is in a higher tier than $M_0$, then obtain $G_i$ from $G_{i-1}$ by adding a disagreement edge.
Otherwise, set $G_i = G_{i-1}$. In either case, it is clear that $G_i$ is an agreement graph of $M_0$ and $M_i$.

Note that for each \TBR of $\sigma$ we added at most one disagreement edge.
Hence, $G_d$, which is an agreement graph of $M_0$ and $M_d$, is an agreement graph of $N$ and $N'$ with at most $d$ disagreement edges.
This concludes the proof.
\end{proof}

\begin{lemma} \label{clm:unets:AD:TBRrelation:upper}
Let $N, N' \in \unets$.
Then $\dTBR(N, N') \leq 2 \dAD(N, N') \text{.}$
\end{lemma}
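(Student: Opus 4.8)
The plan is to reverse the construction of \cref{clm:unets:AD:TBRrelation:lower}: instead of turning a \TBR-sequence into an agreement graph, I would turn a maximum agreement graph into a short \TBR-sequence. So let $G$ be a MAG of $N$ and $N'$ with $k = \dAD(N,N')$ disagreement edges; recall that $G$ then has exactly $2k$ sprouts, two per disagreement edge. As in the proof of \cref{clm:AD:metric}, I would first reduce to the case $r = r'$ (so $l = 0$) and treat the tier difference at the end, and I would fix, using \cref{clm:unets:orderedAE}, an ordered agreement embedding of $G$ into $N$ and an ordered agreement embedding of $G$ into $N'$. The two embeddings map the agreement subgraphs $S_1,\dots,S_m$ onto the same abstract subgraphs of $N$ and of $N'$ (these carry no sprouts), so I may regard $N$ and $N'$ as the same collection of ``islands'' $S_1,\dots,S_m$ glued together by the disagreement edges in two different ways. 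The goal is then to morph the gluing used by $N$ into the gluing used by $N'$.

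The construction processes the disagreement edges in the order $E_1,\dots,E_k$ supplied by the ordered agreement embedding into $N'$, maintaining a current network that already realises the $N'$-embedding on $E_1,\dots,E_{j-1}$ and still realises the $N$-embedding on $E_j,\dots,E_k$. To place $E_j = \set{\bar x, \bar y}$ I would use at most two \TBRZ operations, one per endpoint: a \TBRZ that removes the current edge playing the role of $E_j$ and re-adds it with one subdivision point recovering the old position of $\bar y$ and the other at the $N'$-target of $\bar x$ relocates the $\bar x$-endpoint alone (a single \PRZ-like move, realisable as a \TBRZ), and a second such \TBRZ relocates $\bar y$; the required embedding changes are exactly those of \cref{fig:AD:embeddingChange} used throughout \cref{clm:AD:metric,clm:unets:AD:TBRrelation:lower}. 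By the ordering, each target of $E_j$ lies on an agreement subgraph or on some $E_i$ with $i<j$ that is already in place, so the targets are present in the current network. After $E_k$ is placed the current network realises the whole $N'$-embedding of $G$ and hence equals $N'$, and I have used at most $2k$ operations, giving $\dTBR(N,N') \le 2k = 2\dAD(N,N')$. For a genuine tier difference $l = r'-r > 0$ the $l$ disagreement edges that appear only in the $N'$-embedding are created by a single \TBRP each (both endpoints at once) rather than by two moves, so the bound only improves.

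The main obstacle is verifying that every intermediate graph stays in \unets, that is, stays connected and proper. Connectivity is the easy half: since each relocation only slides one endpoint onto the already-connected common structure, and since the first $m-1$ disagreement edges of an ordered embedding span the agreement subgraphs into one connected piece, the current graph never falls apart. Properness is the delicate half, and it is precisely where the factor of two is spent: a single-endpoint relocation can momentarily create a cut-edge that fails to separate two leaves, so the naive move need not be a valid \TBRZ. I expect the real work to be showing that the two moves allotted to each disagreement edge can always be chosen (possibly routing one endpoint through an auxiliary position, as the length-three sequence through $M$ does in \cref{clm:ADunequalTBR}) so that both resulting networks are proper, and that this never costs more than two operations per edge. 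Establishing this properness bookkeeping, rather than the counting, is the crux of the proof.
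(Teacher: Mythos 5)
Your proposal leaves the decisive step unproven, and you say so yourself: you defer the validity (connectivity and properness) of the intermediate networks as ``the crux'' to be established later. That deferral is fatal, because the scheme as described actually breaks on simple configurations, and nothing in your sketch rules them out. Consider a MAG with three agreement subgraphs $A,B,C$ and two disagreement edges, glued in $N$ as a path $A\,\text{--}\,F_1\,\text{--}\,B\,\text{--}\,F_2\,\text{--}\,C$ and in $N'$ as $A\,\text{--}\,F_2\,\text{--}\,B\,\text{--}\,F_1\,\text{--}\,C$. Whichever edge your $N'$-ordering processes first, removing it disconnects the current network, and after at most one legal single-endpoint relocation the second relocation of that same edge must re-add it with both endpoints inside one component, leaving the other component stranded -- not a valid \TBRZ. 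One can finish this example with three moves by \emph{interleaving} the two edges (partially move $F_1$, move $F_2$, finish $F_1$), which stays within the budget $2\dAD$, but that is a different, global procedure from your per-edge, fixed-order scheme, and you give no argument that such routing always exists and always fits the budget. Two further difficulties you gloss over: your claim that connectivity is ``the easy half'' is circular, since at the moment you move $E_j$ the later edges are still glued as in $N$ and the earlier ones only partially restore $N'$'s connectivity, so neither ordered embedding controls the mixed gluing; and in the $N$-embedding disagreement edges may be attached to one another, so ``the current edge playing the role of $E_j$'' is in general a path subdivided by such attachments, not a single edge that one \TBRZ can move (embedding changes only re-map $G$ into a fixed network; they cannot stand in for an argument about which network an operation produces).

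The paper's proof of \cref{clm:unets:AD:TBRrelation:upper} spends the factor of two in a completely different way that makes all of these issues vanish: it uses no \TBRZ at all. Starting from ordered agreement embeddings (\cref{clm:unets:orderedAE}), it first applies $d = \dAD(N,N')$ \TBRP operations that add, one per disagreement edge of the embedding into $N'$, an edge realising that connection, while leaving all of $N$'s connections intact; it then applies $k = d - l$ \TBRM operations deleting the edges that realise $N$'s connections. Properness and connectivity are free in the first phase, since adding an edge with a \TBRP to a proper network yields a proper network; and the second phase is valid because, by symmetry, the peak network $M_d$ is also obtainable from $N'$ by the same kind of addition procedure, so each removal is an inverse \TBRP. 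The sequence has length $d + k \le 2\dAD(N,N')$. This ``go up a tier, then come down'' construction is the idea your stay-in-tier relocation plan is missing; without it, you would have to solve exactly the routing and properness problems above, which is not obviously easier than the theorem itself.
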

\begin{proof}
Suppose $N$ and $N'$ are in tier $r$ and $r'$, respectively, and that $r' \geq r$. Let $l = r' - r$.
Let $d = \dAD(N, N')$ and $k = d - l$.
Let $G$ be a MAG of $N$ and $N'$ with agreement subgraphs $S_1, \ldots, S_m$ 
and disagreement edges $E_1, \ldots, E_{k}, E_{k+1}  \ldots, E_{d}$.
To prove the theorem, we construct a \TBR-sequence 
    $$\sigma = (N = M_0, M_1, \ldots, M_d, \ldots, M_{d + k} = N')$$
  such that $M_{i}$ is obtained from $M_{i-1}$ by a \TBRP for $i \in \set{1, \ldots, d}$ and by a \TBRM for $i \in \set{d + 1, \ldots, d + k}$.  
Along $\sigma$ we maintain a series of graphs $G_0, \ldots, G_{d + k}$ 
    such that $G_i$ has an agreement embedding into~$M_i$.

Fix ordered agreement embeddings of $G$ into $N$ and $N'$, which is possible by \cref{clm:unets:orderedAE}.
Let $N_1, \ldots, N_m$ be the subgraphs of $N$ to which $S_1, \ldots, S_m$ of $G$ are mapped, respectively.
We define $N'_1, \ldots, N'_m$ analogously for $N'$.
Note that the disagreement edges of $G$ are mapped to paths in $N$ (resp.~$N'$) that (as a whole) pairwise connect the $N_i$'s (resp. $N'_i$'s).  
The idea is now as follows.
From $M_0$ to $M_d$ we add $d$ edges to reconstruct the paths that connect the $N'_i$'s as in $N'$ while maintaining the paths that connect the $N_i$'s as in $N$.
From $M_d$ to $M_{d + k}$ we then remove edges guided by how the paths connect the $N_i$'s in $N$.
This is illustrated in \cref{fig:unets:AD:TBRrealation:upper:idea}.
We now define the graphs $G_i$ formally and explain how to construct $\sigma$.

\begin{figure}[htb]
  \centering
  \includegraphics{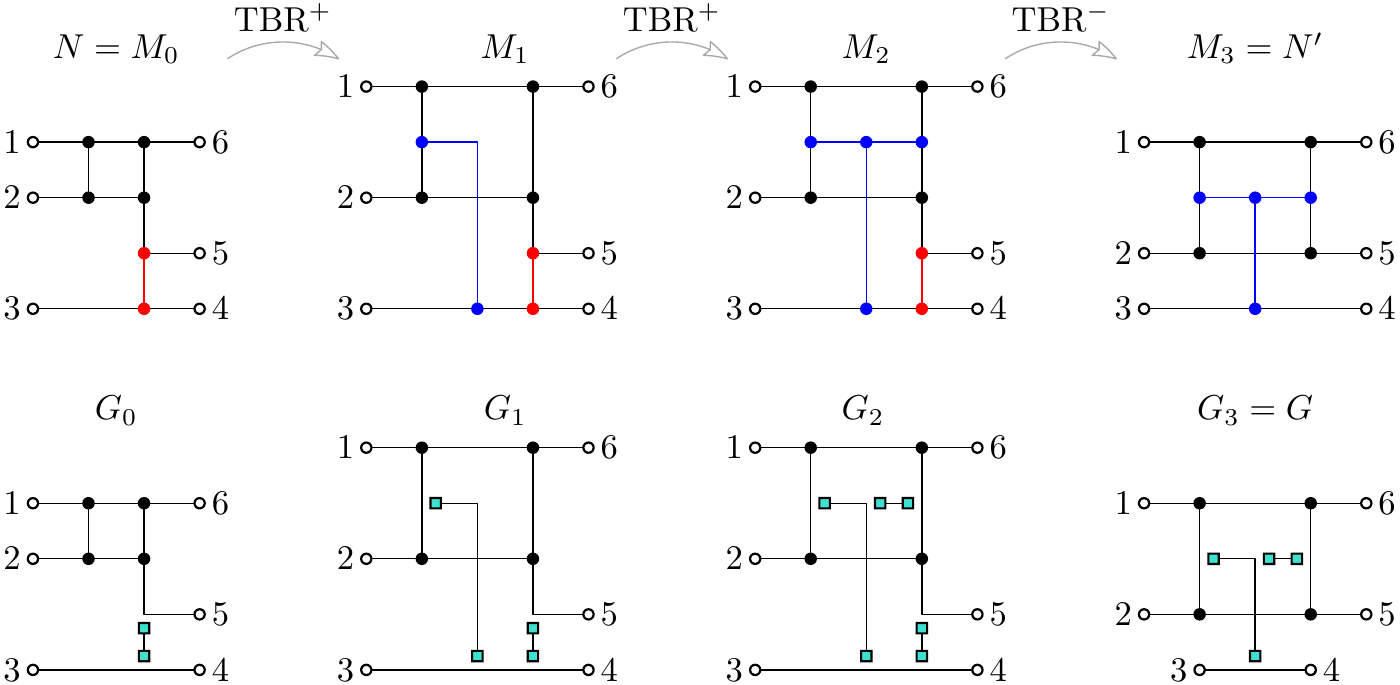}
  \caption{Construction of a TBR-sequence from $N$ to $N'$ based on a MAG $G$ of $N$ and $N'$.}
  \label{fig:unets:AD:TBRrealation:upper:idea}
\end{figure}

Let $G_0$ be $G$ without $l$ disagreement edges. 
Therefore, $G_0$ has $k$ disagreement edges and an agreement embedding into $N_0 = N$ without spare disagreement edges.
For $i \in \set{1, \ldots, d}$ let $G_i$ be $G_{i-1}$ plus one disagreement edge.
Next, for $i \in \set{d+1, \ldots, d+k}$ let $G_i$ be $G_{i-1}$ minus one disagreement edge.
Let~$E_j^i$ for $i \in \set{0, 1, \ldots, d + k}$ and $j \in \set{1, \ldots}$ denote the disagreement edges of $G_i$.

In $M_0$ colour the subgraph to which agreement subgraphs of $G_0$ are mapped black.
Colour all other vertices and edges red.
Obtain $M_1$ from $M_0$ as follows.
First, assume that $E_1^{d+k}$ is attached to edges $\bar e$ and $\bar e'$ of agreement subgraphs in $M_{d+k}$.
Consider the paths $P_{\bar e}$ and $P_{\bar e'}$ in $N_0$ to which $\bar e$ and $\bar e'$ are mapped. 
Ignoring vertices on these paths that are incident to red edges, we can perceive $P_{\bar e}$ and $P_{\bar e'}$ as edges $e$ and $e'$.
Then add with a \TBRP an edge $f$ from $e$ to $e'$.
Next assume that $E_1^{d+k}$ is attached to a labelled singleton $\bar u$ of $G$ in $M_{d+k}$. 
Then let $u$ be the leaf of $M_0$ to which $\bar u$ is mapped. 
Note that $u$ is incident to a red edge, say $e$, in $M_0$.
Obtain $e'$ as in the previous case. 
If $E_1^{d+k}$ is attached to two labelled singletons, then obtain a second red edge $e'$ analogous to how we obtained $e$. 
In either case, apply the \TBRP that adds an edge $f$ from $e$ to $e'$ .
Let $M_1$ be the resulting network. Colour the new edge $f$ blue.
Obtain an agreement embedding of $G_1$ into $M_1$ by extending the agreement embedding of $G_0$ into $N_0$ by mapping $E_{k + 1}^1$ to $f$.
Note that $M_1$ is a proper phylogenetic network since adding an edge (with a \TBRP) to a proper network yields a proper network.
In particular, edges obtained from subdividing $e$ and~$e'$ still lie on paths between leaves and so does thus $f$. 
Repeat this process to obtain $M_i$ from $M_{i-1}$ for $i \in \set{2, \ldots, d}$ based on how $E_i^{d+k}$ embeds into $M_{d+k}$. 

Observe that $M_d$ and $G_d$ with its agreement embedding into $M_d$ can also be obtained by applying the construction we used to obtain $M_d$ from $M_0$
by starting from $M_{d+k}$ and considering the agreement embedding of $G_0$ into $M_0$ (instead of the agreement embedding of $G_{d+k}$ into $M_{d+k}$).
The only difference is that in the two resulting agreement embeddings of $G_d$ into $M_d$ blue disagreement edges might be attached to red edges or vice versa,
wherever there is a labelled singleton (leaf). 
Nevertheless, this shows that we can construct the full TBR-sequence $\sigma$.
To conclude the proof, note that $d + k \leq 2d$.
\end{proof}

From \cref{clm:unets:AD:TBRrelation:lower} and \cref{clm:unets:AD:TBRrelation:upper} we get the following theorem.

\begin{theorem} \label{clm:unets:AD:TBRrelation}
Let $N, N' \in \unets$.
Then $$\dAD(N, N') \leq \dTBR(N, N') \leq 2 \dAD(N, N') \text{.}$$
\end{theorem}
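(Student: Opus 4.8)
The plan is to obtain the theorem as an immediate consequence of the two preceding lemmas, whose proofs already contain all of the substantive work. Concretely, \cref{clm:unets:AD:TBRrelation:lower} supplies the lower bound $\dAD(N, N') \leq \dTBR(N, N')$ by starting from a shortest TBR-sequence and reading off an agreement graph with at most $\dTBR(N, N')$ disagreement edges, while \cref{clm:unets:AD:TBRrelation:upper} supplies the upper bound $\dTBR(N, N') \leq 2\dAD(N, N')$ by starting from a MAG and constructing an explicit TBR-sequence of length at most $2\dAD(N, N')$. Chaining these two inequalities yields
$$\dAD(N, N') \leq \dTBR(N, N') \leq 2\dAD(N, N')$$
for all $N, N' \in \unets$, which is exactly the claim.

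Since both directions are already discharged, there is essentially no remaining obstacle in this final step: the two inequalities compose with no compatibility condition to verify, as each lemma holds for arbitrary $N, N' \in \unets$ with no tier restriction. If instead I were reconstructing the argument from scratch rather than citing the lemmas, the genuine difficulty would lie entirely in the upper bound. The lower bound is a fairly routine ``follow the TBR-sequence and record which edges get moved'' argument, building $G$ incrementally while tracking agreement embeddings through the four cases. By contrast, the upper bound requires the careful two-phase construction (first adding $d$ edges to rebuild the paths that connect the images of the agreement subgraphs as in $N'$, then removing $k$ edges guided by how those paths run in $N$), together with the bookkeeping of ordered agreement embeddings and embedding changes needed to keep every intermediate $M_i$ a \emph{proper} phylogenetic network. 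The factor of two emerges precisely from this split into $d$ additions followed by $k \leq d$ removals, giving $d + k \leq 2d$; and \cref{clm:ADunequalTBR} already exhibits a pair with $\dAD = 2$ and $\dTBR = 3$, confirming that the two inequalities are genuinely distinct and that neither constant collapses to the other.
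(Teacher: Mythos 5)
Your proposal is correct and matches the paper exactly: the theorem is stated immediately after \cref{clm:unets:AD:TBRrelation:lower} and \cref{clm:unets:AD:TBRrelation:upper}, with the paper's entire ``proof'' being the one-line observation that the two lemmas combine to give both inequalities. Your additional commentary on where the substantive difficulty lies (the two-phase construction in the upper bound, with $d + k \leq 2d$) and on \cref{clm:ADunequalTBR} showing the bounds are genuinely distinct is accurate but not needed for the deduction itself.
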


\citet[Corollary 3.3]{JK19} showed that the PR-distance is bound from below by the \TBR-distance
and from above by at most twice the \TBR-distance. Hence, we get the following corollary.

\begin{corollary} \label{clm:unets:AD:PRrelation}
Let $N, N' \in \unets$.
Then $$\dAD(N, N') \leq \dPR(N, N') \leq 4 \dAD(N, N') \text{.}$$
\end{corollary}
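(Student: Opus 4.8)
The plan is to combine \cref{clm:unets:AD:TBRrelation} with the cited PR--TBR comparison from \citet{JK19} by a straightforward chaining of the two double inequalities. The final corollary asserts that $\dAD(N, N') \leq \dPR(N, N') \leq 4\,\dAD(N, N')$, and both halves fall out of sandwiching the PR-distance between one and two times the TBR-distance and then substituting the AD--TBR bounds.

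\begin{proof}
By \citet[Corollary 3.3]{JK19} we have, for all $N, N' \in \unets$,
$$\dTBR(N, N') \leq \dPR(N, N') \leq 2\,\dTBR(N, N')\text{.}$$
For the lower bound on the PR-distance, combine the left inequality above with the lower bound $\dAD(N, N') \leq \dTBR(N, N')$ of \cref{clm:unets:AD:TBRrelation} to obtain
$$\dAD(N, N') \leq \dTBR(N, N') \leq \dPR(N, N')\text{.}$$
For the upper bound, combine the right inequality above with the upper bound $\dTBR(N, N') \leq 2\,\dAD(N, N')$ of \cref{clm:unets:AD:TBRrelation}, which gives
$$\dPR(N, N') \leq 2\,\dTBR(N, N') \leq 2 \cdot 2\,\dAD(N, N') = 4\,\dAD(N, N')\text{.}$$
Chaining these two displays yields $\dAD(N, N') \leq \dPR(N, N') \leq 4\,\dAD(N, N')$, as claimed.
\end{proof}

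There is no genuine obstacle here, since the statement is purely a composition of two already-established two-sided bounds; the entire content is arithmetic bookkeeping of the constants, where the factor $2$ from the PR--TBR relation multiplies the factor $2$ from the TBR--AD relation to give the stated factor $4$. The only point worth a moment's care is that both constituent inequalities are quoted as holding for \emph{all} pairs in \unets (with no tier restriction), so the chaining applies uniformly and no case analysis on the tiers of $N$ and $N'$ is needed. I would present the two directions as separate short displays to make the source of each constant transparent, rather than collapsing everything into a single line.
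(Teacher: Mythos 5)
Your proposal is correct and matches the paper's own (implicit) argument exactly: the paper also derives this corollary by chaining \cref{clm:unets:AD:TBRrelation} with Corollary~3.3 of \citet{JK19}, multiplying the two factors of $2$ to get $4$. No gaps; nothing further needed.
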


We now turn to the endpoint agreement distance and look at its relation to the agreement distance and the PR-distance.

\begin{proposition} \label{clm:unets:ADvsEAD}
Let $N, N' \in \unets$.
Then $$\dAD(N, N') \leq \dEAD(N, N') \leq 2\dAD(N, N')\text{.}$$
\end{proposition}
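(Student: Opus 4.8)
The plan is to prove the two inequalities separately by relating maximum agreement graphs (MAGs) and maximum endpoint agreement graphs (MEAGs) directly, rather than going through the rearrangement distances. Recall that $\dAD(N,N')$ counts disagreement edges of a MAG (equivalently, half the number of its sprouts), while $\dEAD(N,N')$ counts $s+l$, where $s$ is the number of sprouts of agreement subgraphs of a MEAG and $l$ is its number of disagreement edges. The key structural distinction is that a \TBRZ prunes an edge at both endpoints, so a disagreement edge of a MAG carries \emph{two} sprouts, whereas a \PRZ prunes at one endpoint, so an MEAG records single-ended prunings.

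For the lower bound $\dAD(N,N') \leq \dEAD(N,N')$, first I would take a MEAG $H$ of $N$ and $N'$ with $s$ agreement-subgraph sprouts and $l$ disagreement edges, so that $\dEAD(N,N') = s+l$. The idea is to build an agreement graph $G$ for $N$ and $N'$ from $H$. Each sprout of an agreement subgraph of $H$ corresponds to a single-endpoint prune; I would cut the edge carrying that sprout to turn it into a genuine disagreement edge of $G$, analogously to how \cref{clm:unets:EADreplug} translates replug operations. A careful bookkeeping, using the fact (\cref{clm:unets:EADreplug} and \cref{clm:unets:EAD:metric}) that EAD equals the replug distance, should show that the resulting $G$ has at most $s+l$ disagreement edges and has agreement embeddings into $N$ and $N'$ of the right form, so $\dAD(N,N') \leq s+l = \dEAD(N,N')$.

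For the upper bound $\dEAD(N,N') \leq 2\dAD(N,N')$, I would start with a MAG $G$ of $N$ and $N'$ with $d = \dAD(N,N')$ disagreement edges, fix ordered agreement embeddings into $N$ and $N'$ via \cref{clm:unets:orderedAE}, and construct an EAG $H$ from $G$. Since $G$ has $2d$ sprouts (each disagreement edge carrying two), and a replug/PR operation only prunes at one endpoint, I would read off from the two ordered embeddings a replug sequence that reconnects the agreement subgraphs one sprout at a time; each disagreement edge of $G$ gives rise to at most two single-ended operations. By \cref{clm:unets:EADreplug}, $\dEAD(N,N') = \distR(N,N')$ is bounded by the length of any replug sequence between $N$ and $N'$, and this sequence has length at most $2d$. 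Hence $\dEAD(N,N') \leq 2\dAD(N,N')$.

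The main obstacle I anticipate is the careful tier bookkeeping and the handling of disagreement edges versus agreement-subgraph sprouts when $N$ and $N'$ lie in different tiers, since in that case the counts $s$, $l$, and $d$ interact through $l = r'-r$ and some edges are present only for the embedding into the higher-tier network. Getting the translation between a single two-ended \TBRZ-style disagreement edge and a pair of one-ended prunings exactly right, so that no sprout or disagreement edge is double-counted and every attachment remains valid after the embedding changes, is where the real work lies; the inequalities themselves then follow from counting.
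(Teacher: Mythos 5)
Your proposal is correct, but its two halves compare quite differently to the paper. For the lower bound, your core construction---cutting the edge that carries each agreement-subgraph sprout of the MEAG $H$ so that it becomes a disagreement edge---is exactly what the paper does: it removes sprouts one at a time by pruning $\bar e = \set{\bar u, \bar v}$ at $\bar v$ (which is either a degree-three vertex or a labelled leaf, so the agreement embeddings into $N$ and $N'$ carry over directly), with the special case that an edge whose two endpoints are both sprouts is simply reclassified, eliminating two sprouts for a single disagreement edge. Your appeal to \cref{clm:unets:EADreplug} and \cref{clm:unets:EAD:metric} is a red herring here: the replug equivalence plays no role in the paper's argument, and the count of at most $s+l$ disagreement edges falls out of the purely combinatorial induction. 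For the upper bound you take a genuinely different route, constructing a replug sequence of length at most $2d$ from ordered embeddings of the MAG and then invoking $\dEAD = \distR$. This is valid---the $d-l$ shared disagreement edges cost two one-ended prunes each and the $l$ tier edges one vertical operation each, giving length $2(d-l)+l \leq 2d$---but it silently reproves the second half of \cref{clm:unets:EADreplug}. The paper's proof of this inequality is a single sentence: a MAG with $k$ disagreement edges \emph{is} already an endpoint agreement graph once each non-tier disagreement edge is regarded as an agreement subgraph carrying two sprouts, so it directly witnesses $\dEAD(N,N') \leq 2(k-l)+l \leq 2k$ by the definition of $\dEAD$ as a minimum over EAGs, with no rearrangement sequences at all. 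Your detour buys nothing extra; the paper's observation additionally dissolves the tier bookkeeping you worried about, since the $l$ tier edges simply remain disagreement edges and are counted once rather than twice.
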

\begin{proof}
  We start with the first inequality.
  Let $H$ be a maximum endpoint agreement graph of $N$ and $N'$.
  Suppose $H$ has $s$ sprouts in agreement subgraphs and $l$ disagreement edges.
  We prove that there is an agreement graph $G$ of $N$ and $N'$ with at most $s + l$ disagreement edges.
  For this, we construct a sequence of graphs $(G_s, G_{s-1}, \ldots, G_0)$
  such that each $G_{i}$ is an endpoint agreement graph of $N$ and $N'$ 
  with at most $i$ sprouts in agreement subgraphs and at most $l + (s-i)$ disagreement edges.
  (We slightly abuse the definition of disagreement edges here 
  and consider any edge incident with two sprouts of $G_i$ as a disagreement edge.)
  Therefore, setting $G = G_0$ will prove the proposition.
  
  Suppose $\bar u$ is a sprout of an agreement subgraph of $G_{i+1}$.
  Let $\bar e = \set{\bar u, \bar v}$ be the edge incident to $\bar u$.
  If $\bar v$ is also a sprout, set $G_i = G_{i+1}$ and classify $\bar e$ as a disagreement edge. 
  Furthermore, we can also set $G_{i-1} = G_i$ since we eliminated two sprouts of agreement subgraphs at once.
  Otherwise, obtain $G_i$ from $G_{i+1}$ by pruning $\bar e$ from $\bar v$. 
  Since $\bar v$ is either a degree vertex or a labelled leaf, 
  we can directly derive agreement embeddings of $G_i$ into $N$ and $N'$ 
  from the agreement embeddings of $G_{i+1}$.
  Since every step reduces the number of sprouts in agreement subgraphs by at least one
  and adds at most one disagreement edge, $G_0$ is as desired.
  
  For the second inequality, note that a MAG of $N$ and $N'$ with $k$ disagreement edges
  is also an EAG of $N$ and $N'$ with $2k$ sprouts.
\end{proof}

\begin{theorem} \label{clm:unets:EAD:PRrelation}
Let $N, N' \in \unets$.
Then $$\dEAD(N, N') \leq \dPR(N, N') \leq 3 \dEAD(N, N') \text{.}$$
\end{theorem}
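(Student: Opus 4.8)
The lower bound should follow the same template as \cref{clm:unets:AD:TBRrelation:lower}. Given a shortest \PR-sequence $\sigma = (N = M_0, M_1, \ldots, M_d = N')$ with $d = \dPR(N, N')$, I would build a sequence of graphs $(N = H_0, H_1, \ldots, H_d = H)$ so that each $H_i$ has an agreement embedding into $M_i$ and into $H_{i-1}$, and hence (by \cref{clm:unets:AE:transitive}) into $N$. The key difference from the TBR case is that a \PRZ prunes only one endpoint, so at each horizontal step I expect to add at most one sprout to an agreement subgraph (when the pruned endpoint lies on the interior of the path to which an edge $\bar e$ of $H_{i-1}$ is mapped, I prune $\bar e$ at the matching end, using an embedding change to realign as in the proof of \cref{clm:unets:EADreplug}). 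Each \PRP contributes at most one disagreement edge and each \PRM removes one. Summing, $H$ is an endpoint agreement graph of $N$ and $N'$ whose total sprout-plus-disagreement-edge count is at most $d$, giving $\dEAD(N, N') \le \dPR(N, N')$. In fact, since a \PR-sequence is a special replug sequence, this inequality also drops out directly from \cref{clm:unets:EADreplug} combined with $\distR(N,N') \le \dPR(N,N')$; I would remark on this to shorten the argument.

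For the upper bound I would mirror \cref{clm:unets:AD:TBRrelation:upper}. Fix a MEAG $H$ of $N$ and $N'$ with $s$ sprouts in agreement subgraphs and $l$ disagreement edges, so $\dEAD(N,N') = s + l$. I would fix ordered agreement embeddings of $H$ into $N$ and into $N'$ (these exist by the argument sketched after \cref{clm:unets:EAGexample1}). The plan is to realize the passage from $N$ to $N'$ by moving, for each sprout of $H$, the edge incident to it from its position dictated by the embedding into $N$ to its position dictated by the embedding into $N'$, together with $l$ vertical operations to reconstruct the disagreement edges in the higher tier. Each edge of $H$ carrying a sprout needs its loose end regrafted once: a single \PRZ suffices per sprout, giving roughly $s$ horizontal operations, plus $l$ \PRP operations for the tier difference. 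The subtlety is that a single edge of an agreement subgraph may carry sprouts at \emph{both} ends, and these two endpoints must be relocated independently; since a \PRZ moves only one endpoint at a time, such an edge forces two prune-and-regraft operations rather than one. This is exactly where the factor of three will arise rather than a factor of two.

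The main obstacle is controlling this counting precisely and ensuring properness is maintained throughout. I would argue as follows: each agreement subgraph $S_i$ of $H$ carries some number of sprouts; when I relocate the sprouts one at a time by \PRZ operations, the worst case is an edge whose two endpoints both need moving, but the ordered agreement embedding lets me process these in a fixed order so that the intermediate networks stay proper and connected (appealing, as in \cref{clm:unets:AD:TBRrelation:upper}, to the fact that pruning and regrafting an edge on a path between two leaves yields a proper network). The count $s + l$ sprouts-and-disagreement-edges translates into at most $s + 2l$ or at most $(3/2)(s+l)$ rearrangement operations depending on how the disagreement edges interact with the sprout relocations; tracing the bound carefully, the total number of \PR operations is bounded by $3(s+l) = 3\dEAD(N,N')$. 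I would present the relocation order explicitly and verify properness of each intermediate $M_i$, since that is the only place the argument can genuinely fail, and then conclude $\dPR(N,N') \le 3\dEAD(N,N')$, which together with the lower bound establishes the theorem.
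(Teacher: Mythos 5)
Your lower bound is correct and is exactly the paper's argument: a \PR-sequence is a replug sequence, so $\dEAD(N,N') = \distR(N,N') \leq \dPR(N,N')$ by \cref{clm:unets:EADreplug}. (Your longer direct construction along the \PR-sequence is unnecessary, as you yourself note.)

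The upper bound, however, has a genuine gap, and you have misidentified where the factor $3$ comes from. You attribute it to edges of agreement subgraphs carrying sprouts at both ends, but such an edge contributes \emph{two} sprouts to $s$ and needs two \PRZ operations, so it preserves a one-operation-per-sprout ratio and creates no loss at all; likewise your intermediate counts ``$s+2l$'' and ``$(3/2)(s+l)$'' do not correspond to any actual accounting. The real difficulty, which you flag (``ensuring properness is maintained throughout'') but propose to solve only by choosing a processing order, cannot be solved by ordering alone: for a given sprout $\bar u$ with incident edge mapped to $e = \set{u,v}$ in the current network $M_{i-1}$, the single \PRZ that prunes $e$ at $u$ and regrafts it where the embedding into $N'$ dictates may yield a disconnected or improper graph, no matter in which order the sprouts are handled. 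The paper's mechanism for this case is a detour via \emph{ghost edges}: if the direct \PRZ is invalid, first apply a \PRP adding a temporary edge $f$ from $e$ to the edge incident to leaf $1$ (recorded in $H_i$ as a ghost disagreement edge, with an embedding change), after which the desired \PRZ for $\bar u$ is valid; once every sprout is handled, at most $s$ \PRM operations remove all ghost edges, since at that point the non-ghost part of the embedding covers a subdivision of $N'$. The resulting count is $l$ \PRP for the disagreement edges plus at most $3$ operations per sprout (ghost \PRP, the \PRZ, ghost \PRM), i.e.\ $d' \leq l + 3s \leq 3(s+l) = 3\dEAD(N,N')$ --- this detour is precisely why the bound is $3\dEAD$ and not $\dEAD$ plus lower-order terms. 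Without the ghost-edge idea (or an equivalent device), your relocation step is not well defined in the bad cases and the proof does not go through.
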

\begin{proof}
  Without loss of generality, assume that $N$ is not in a higher tier than $N'$. 
  For the lower bound, consider a shortest \PR-sequence $\sigma$ from $N$ to $N'$.
  Note that $\sigma$ is also a replug sequence. 
  There is thus a replug sequence from $N$ to $N'$ whose length is at most the length of $\sigma$.
  The lower bound now follows from \cref{clm:unets:EADreplug}.
  
  Next, we prove the upper bound.
  Let $H$ be a MEAG for $N$ and $N'$ with $s$ sprouts in agreement subgraphs and $l$ disagreement edges.
  Fix ordered endpoint agreement embeddings of $H$ into $N$ and $N'$.
  Let $d = \dEAD(N, N') = s + l$.
  We construct a PR-sequence $\sigma = (N = M_0, M_1, \ldots, M_{d'} = N')$ with $d' \leq 3d$.
  Along $\sigma$, we maintain a sequence of graphs $(H = H_0, H_1, \ldots, H_{d'})$ that consist of $H$ plus possibly extra disagreement edges such
  that $H_i$ has an agreement embedding into $M_i$. We call these extra disagreement edges \emph{ghost disagreement edges}.
  
  Let $E_1, \ldots, E_l$ be the disagreement edges of $H$.
  For $i \in \set{1, \ldots, l}$ obtain $M_i$ from $M_{i-1}$ by adding an edge $e$ with a \PRP
  according to where the disagreement edge $E_i$ is attached to in the agreement embedding of $H$ into $N'$.
  If $E_i$ is attached to a labelled singleton $\bar v$, then both $\bar v$ and a sprout $\bar u$ are mapped to a leaf $v$ of $M_{i-1}$.
  In this case, attach $e$ to the edge incident to $v$.
  For an agreement embedding of $H$ into $M_i$, map the disagreement edge $E_i$ to the newly added edge.
  If $E_i$ should be attached to $v$, apply the appropriate embedding change with $\bar u$. 
  Set $H_i = H$.
  
  We now use \PRZ to move edges according from where sprouts are attached to in $M_i$ to where they are attached to in $N'$.
  If we have done this for a sprout, we call it \emph{handled} and \emph{unhandled} otherwise.
  Let $\bar u$ be an unhandled sprout of $H_i$. 
  Let $\bar u$ and its incident edge be mapped to $u$ and $e = \set{u, v}$ of $M_{i-1}$, respectively.
  If $e$ can be pruned at $u$ and attached to the edge according to where $\bar u$ is mapped to in $N'$ 
  such that the result is a proper phylogenetic network, then apply this \PRZ to obtain $M_i$. 
  Set $H_i = H_{i-1}$.
  Note that in the case that $\bar u$ is mapped to a leaf $w$ in $N'$, then $e$ is attached the edge incident to $w$, 
  and we apply the appropriate embedding change for $H_i$. 
  (Apply this to each unhandled sprout where possible).
  Otherwise, use a \PRP to add a (ghost) edge $f$ from $e$ to the edge incident to leaf 1 to obtain $M_i$.
  Obtain $H_i$ from $H_{i-1}$ by adding a ghost disagreement edge $F$. 
  Map $F$ to $f$ and  apply an embedding change with respect to $F$ and $\bar u$. 
  Note that now the first case applies for $\bar u$ and $e$. Thus we also obtain $M_{i+1}$ and $H_{i+1}= H_i$.
  When all sprouts are handled, the agreement embedding of $H_i$ without ghost disagreement edges is mapped to the subgraph of $M_i$
  that is precisely a subdivision of $N'$. We thus need at most $s$ further \PRM to remove all ghost edges.
  
  In total, this process requires at most $l + 3s = d' \leq 3d$ \PR. This proves the upper bound.
\end{proof}

\section{Concluding remarks}
\label{sec:discussion}

In this paper, we defined maximum agreement graphs (MAG) 
for two unrooted, proper, binary phylogenetic networks. 
Like maximum agreement forests for trees, a MAG models how two networks
agree on subgraphs that stay untouched when moving edges with TBR operations. 
If the two networks are in different tiers, then a MAG also models how the networks disagree on that.
Based on MAGs, we defined the agreement distance of phylogenetic networks.
By showing that this new metric is equivalent to the TBR-distance for two trees,
we obtained that it is NP-hard to compute the agreement distance. 

We have seen that the agreement distance and the TBR-distance are equivalent for trees and for networks with distances of at most one. 
Furthermore, we know that the agreement distance of a tree and a network equals their TBR-distance. 
On the other hand, there are networks $N$ and $N'$, as in \cref{fig:unets:ADunequalTBR}, with agreement distance two but higher TBR-distance.
However, note that $N$ and $N'$ are in tier seven. 
It is therefore of interest to further study when exactly the agreement distance is equivalent to the TBR-distance and when not.
In general, we showed that the agreement distance of two networks provides a natural lower bound
and an upper bound with factor two on their TBR-distance.
If we drop the requirement that networks have to be proper, it is also open whether the agreement distance 
and the TBR-distance are equivalent or not.

Like SPR on trees has been generalised to PR on networks, we have generalised maximum endpoint agreement forests of \citet{WM18}
to maximum endpoint agreement graphs (MAEGs) for networks.
We showed that MAEGs induce a metric, called endpoint agreement distance, which bounds the PR-distance naturally from below 
and with a factor of three from above.
Furthermore, we showed that the agreement distance provides bounds on the PR-distance either via the TBR-distance
or via its relation to the endpoint agreement distance.

MAFs and MAEFs have been used to develop algorithms that compute the TBR-distance and PR-distance of two trees, respectively.
It is thus of interest to see whether MAGs can be utilised to develop approximation algorithms for the agreement distance.
Note that such an algorithm would also be an approximation algorithm of the TBR- and the PR-distance.

\phantomsection 
\pdfbookmark[1]{Acknowledgements}{acknowledgements} 
\acknowledgements
The author would like to thank the New Zealand Marsden Fund for their financial support
and the anonymous reviewers for their very helpful comments.

\phantomsection
\pdfbookmark[1]{References}{references}
\nocite{*}
\bibliographystyle{abbrvnat} 

\end{document}